\setlist[enumerate]{nosep}
\definecolor{lightgray}{gray}{0.9}
\definecolor{labelkey}{rgb}{0,0.08,0.45}
\definecolor{refkey}{rgb}{0,0.6,0.0}
\definecolor{Brown}{rgb}{0.45,0.0,0.05}
\definecolor{lime}{rgb}{0.00,0.8,0.0}
\definecolor{lblue}{rgb}{0.5,0.5,0.99}
\colorlet{hlcyan}{cyan!30}
\def\namedlabel#1#2{\begingroup
	\def\@currentlabel{#2}%
	\label{#1}\endgroup
}
\newcommand{\menge}[2]{\big\{{#1}~\big |~{#2}\big\}}
\newcommand{\mmenge}[2]{\bigg\{{#1}~\bigg |~{#2}\bigg\}}
\newcommand{\fenv}[1]%
{\ensuremath{\,\overrightarrow{\operatorname{env}}_{#1}}}
\newcommand{\benv}[1]%
{\ensuremath{\,\overleftarrow{\operatorname{env}}_{#1}}}
\newcommand{\scal}[2]{\left\langle{#1},{#2}  \right\rangle}
\newcommand{\RR}{\ensuremath{\mathbb R}}
\newcommand{\Ct}{\ensuremath{\widetilde{C}}}
\newcommand{\argmin}{\ensuremath{\operatorname{argmin}}}
\newcommand{\Id}{\ensuremath{\operatorname{Id}}}
\newcommand{\proj}[1]{{\thinspace P\thinspace}_%
	{\negthinspace\negthinspace #1}}
\newcommand{\graphc}{\stackrel{g}{\rightarrow}}
\DeclarePairedDelimiterX\set[2]{ \{ }{ \}_{#2} }{#1}
\DeclarePairedDelimiterX\rb[1]{ ( }{ ) }{#1}
\crefname{equation}{}{equations}
\crefname{chapter}{Appendix}{chapters}
\crefname{item}{}{items}
\crefname{enumi}{}{}
\theoremstyle{definition}
\newtheorem{theorem}{Theorem}[section]
	\newtheorem{proposition}[theorem]{Proposition}
	\newtheorem{definition}[theorem]{Definition}
	\newtheorem{example}[theorem]{Example}
	\newtheorem{fact}[theorem]{Fact}
	\newtheorem{remark}[theorem]{Remark}
	\providecommand{\norm}[1]{\lVert#1\rVert}
	\providecommand{\lam}{\lambda}
	\providecommand{\RR}{\mathbb{R}}
	\providecommand{\proj}{\operatorname{Proj}}
	\providecommand{\gr}{\operatorname{gra}}
	\providecommand{\Id}{\operatorname{{ Id}}}
	\providecommand{\argmin}{\mathrm{arg}\!\min}
	\providecommand{\gr}{\operatorname{gra}}
	\providecommand{\Id}{\operatorname{Id}}
	\providecommand{\RR}{\mathbb{R}}
	\definecolor{myblue}{rgb}{.8, .8, 1}
	\newcommand*\mybluebox[1]{%
		\colorbox{myblue}{\hspace{1em}#1\hspace{1em}}}
		\newcommand{\Calp}{{C_{\alpha}}}
		\newcommand{\tCalp}{{\widetilde{C}_{\alpha}}}
		\tikzset{style green/.style={
				set fill color=green!50!lime!60,
				set border color=white,
			},
			style cyan/.style={
				set fill color=cyan!90!blue!60,
				set border color=white,
			},
			style gray/.style={
				set fill
				color=black!10!,
				set border color=white,
			},
			style orange/.style={
				set fill color=orange!80!red!60,
				set border color=white,
			},
			hor/.style={
				above left offset={-0.15,0.31},
				below right offset={0.15,-0.125},
				#1
			},
			ver/.style={
				above left offset={-0.1,0.3},
				below right offset={0.15,-0.15},
				#1
			}
		}
		\pgfplotsset{compat=1.16}
\begin{document}
			
			\title{\textsc{
					Projecting onto rectangular hyperbolic paraboloids in Hilbert space
			}}
			
			\author{
				Heinz H.\ Bauschke\thanks{
					Department of Mathematics, University
					of British Columbia,
					Kelowna, B.C. V1V~1V7, Canada. E-mail:
					\texttt{heinz.bauschke@ubc.ca}.},~
				Manish Krishan Lal\thanks{
					Department of Mathematics, University
					of British Columbia,
					Kelowna, B.C. V1V~1V7, Canada. E-mail:
					\texttt{manish.krishanlal@ubc.ca}.},~
				and
				Xianfu Wang\thanks{
					Department of Mathematics, University
					of British Columbia,
					Kelowna, B.C. V1V~1V7, Canada. E-mail:
					\texttt{shawn.wang@ubc.ca}.}
			}
			\date{June 9, 2022} 
			\maketitle
			
			
			\begin{abstract} 
				In $\RR^3$, a hyperbolic paraboloid is a classical saddle-shaped quadric surface. Recently, Elser has modeled problems arising in Deep Learning using rectangular hyperbolic paraboloids in $\RR^n$. Motivated by his work, we provide a rigorous analysis of the associated projection. In some cases, 
				finding this projection amounts to finding a certain root of a quintic or cubic polynomial. 
				We also observe when the projection is not a singleton and point out connections to graphical and set convergence.
			\end{abstract}
			
			{
				\noindent
				{\bfseries 2020 Mathematics Subject Classification:}
				{Primary 41A50, 90C26; Secondary 15A63, 46C05. 
				}
				
				\noindent {\bfseries Keywords:}
				cross, graphical convergence, rectangular hyperbolic paraboloid, projection onto a nonconvex set. 
			}
			
			\section{Introduction}
			Throughout this paper, we assume that
			\begin{empheq}[box=\mybluebox]{equation*}
				\text{$X$ is
					a real Hilbert space with an inner product
					$\scal{\cdot}{\cdot}\colon X\times X\to\RR$, }
			\end{empheq}
			and induced norm $\|\cdot\|$, and that 
			$\alpha\in \RR\smallsetminus\{0\}$ and $\beta>0$. 
			Define the $\beta$-weighted norm on
			the product space $X\times X\times\RR$ by
			$$(\forall (x,y, \gamma)\in X\times X\times \RR)\ \|(x,y,\gamma)\| :=\sqrt{\|x\|^2+\|y\|^2+\beta^2|\gamma|^2}.$$
			Now define the set 
			\begin{empheq}[box=\mybluebox]{equation}
				\Calp:= \menge{(x,y,\gamma)\in X\times X\times \RR}{\scal{x}{y}=\alpha \gamma}.
			\end{empheq}
			The set $C_\alpha$ is a special bilinear constraint set in optimization,
			and it corresponds to a rectangular (a.k.a.\ orthogonal) 
			hyperbolic paraboloid in geometry \cite{odehnal2020universe}. 
			Motivated by Deep Learning, Elser recently presented in \cite{elser2021learning} a formula for 
			the projection $\proj{\Calp}(x_{0}, y_{0},\gamma_{0})$ when $x_{0}\neq \pm y_{0}$. However, complete mathematical justifications were not presented, and the case when $x_0=\pm y_0$ was not considered. 
			The goal of this paper is to provide a complete analysis of $\proj{\Calp}$ that is applicable to all possible cases. 
			
			The paper is organized as follows. 
			We collect auxiliary results in 
			\cref{sec:aux}. Our main result is proved in \cref{sec:main} which also
			contains a numerical illustration. 
			The formula for the projection onto the set $\Calp$ is presented in 
			\cref{sec:further}.

			As usual, the distance function and projection mapping associated to
			$\Calp$ are denoted by
			$d_{\Calp}(x_{0},y_{0},\gamma_{0}):=\inf_{(x,y,\gamma)\in\Calp}\|(x,y,\gamma)-(x_{0},y_{0},\gamma_{0})\|$
			and
			$\proj{\Calp}(x_{0},y_{0},\gamma_{0}):=
			\argmin_{(x,y,\gamma)\in \Calp}\|(x,y,\gamma)-(x_{0},y_{0},\gamma_{0})\|$,
			respectively. We say that $x,x_{0}\in X$ are 
			\emph{conically dependent} if there exists $s\geq 0$ such that
			$x=sx_{0}$ or $x_{0}=sx$.

			\section{Auxiliary results}
			
			\label{sec:aux}
			We start with some elementary properties of $\Calp$, and justify the
			existence of projections onto these sets.
			
			\begin{proposition} The following hold:
				\begin{enumerate}
					\item\label{i:norm-closed}
					The set 
					$\Calp$ is closed.
					If $X$ is infinite-dimensional, then $\Calp$ 
					is not weakly closed; in fact, $\overline{\Calp}^{\text{\rm weak}}=X\times X\times\RR$.
					\item\label{i:proxregular}
					$\Calp$ is prox-regular in $X\times X\times \RR$. Hence, for every point in $(x_{0},y_{0},\gamma_{0})\in \Calp$,
					there exists a neighborhood such that the projection mapping $\proj{\Calp}$ is single-valued.
				\end{enumerate}
			\end{proposition}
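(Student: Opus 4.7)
The plan is to handle the two parts separately, each exploiting the smoothness of the bilinear constraint $g(x,y,\gamma):=\scal{x}{y}-\alpha\gamma$.

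For the closedness statement in \ref{i:norm-closed}, the idea is simply that $g$ is jointly continuous in the product norm (Cauchy--Schwarz bounds $|\scal{x_n}{y_n}-\scal{x}{y}|$ by $\|x_n-x\|\|y_n\|+\|x\|\|y_n-y\|$), so $\Calp=g^{-1}(0)$ is closed. For the weak-closure claim, I would fix an arbitrary target $(x_0,y_0,\gamma_0)\in X\times X\times\RR$ and construct a sequence in $\Calp$ weakly converging to it. Since $X$ is infinite-dimensional, pick a unit orthonormal sequence $(e_n)$ with $e_n\weakly 0$, and seek corrections of the form $(x_0+e_n,\,y_0+s_n e_n,\,\gamma_0)$. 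The membership in $\Calp$ forces
\[
s_n \;=\; \frac{\alpha\gamma_0-\scal{x_0}{y_0}-\scal{e_n}{y_0}}{1+\scal{x_0}{e_n}},
\]
well-defined for large $n$ and convergent to $\alpha\gamma_0-\scal{x_0}{y_0}$; in particular $(s_n)$ is bounded, so $s_n e_n\weakly 0$, and the constructed sequence weakly converges to $(x_0,y_0,\gamma_0)$. Hence $\overline{\Calp}^{\mathrm{weak}}=X\times X\times\RR$.

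For the prox-regularity in \ref{i:proxregular}, the strategy is to recognize $\Calp$ as a smooth hypersurface. The function $g$ is $C^\infty$ with Fr\'echet derivative $\nabla g(x,y,\gamma)=(y,x,-\alpha)$; since $\alpha\neq 0$, this gradient never vanishes on $X\times X\times\RR$, so in particular $\Calp$ is a regular level set. Applying the implicit function theorem in Hilbert space at any $(x_0,y_0,\gamma_0)\in\Calp$ realizes $\Calp$ locally as a $C^\infty$ submanifold of codimension $1$, and the normal cone at every point reduces to the line $\RR\cdot(y,x,-\alpha)$. Prox-regularity then follows from the classical fact that a $C^2$ (indeed $C^{1,1}$) submanifold of a Hilbert space is prox-regular at each of its points, and prox-regularity in turn guarantees local single-valuedness of $\proj{\Calp}$.

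The expected main obstacle is the weak-density part: one must pick the perturbation carefully so that it simultaneously lies in $\Calp$ and admits bounded coefficients along $e_n$ — an unbounded multiplier would destroy the weak convergence. The formula for $s_n$ above is chosen precisely to keep the construction bounded while solving the single bilinear equation that defines $\Calp$; the remaining pieces are essentially checking continuity and invoking the implicit function theorem.
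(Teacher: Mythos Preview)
Your proposal is correct. For \ref{i:proxregular} you follow essentially the paper's argument: both identify $\Calp$ as the regular level set of $g(x,y,\gamma)=\scal{x}{y}-\alpha\gamma$, note that $\nabla g(x,y,\gamma)=(y,x,-\alpha)\neq 0$ because $\alpha\neq 0$, and then invoke standard prox-regularity results for smooth level sets (the paper cites \cite[Example~6.8]{Rock98} and \cite[Propositions~2.4 and~4.4]{thibault} explicitly, where you appeal to the implicit function theorem and the ``$C^{1,1}$ submanifolds are prox-regular'' folklore).

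The genuine difference is in the weak-density half of \ref{i:norm-closed}. The paper does not build a sequence directly: it quotes \cite[Proposition~2.1]{hyperbola}, which already says that for each fixed $\gamma$ the slice $\{(x,y):\scal{x}{y}=\alpha\gamma\}$ is weakly dense in $X\times X$, and then takes the union over $\gamma\in\RR$. Your argument is instead self-contained: you perturb an arbitrary target $(x_0,y_0,\gamma_0)$ along an orthonormal sequence $(e_n)$ and solve the single scalar equation for the multiplier $s_n$, checking that boundedness of $(s_n)$ forces $s_n e_n\weakly 0$. The paper's route is shorter once the external reference is granted; yours is more elementary and exposes exactly why infinite-dimensionality matters (existence of $e_n\weakly 0$ with $\|e_n\|=1$). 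Both are valid, and your explicit construction could in fact serve as a proof of the cited proposition.
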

			\begin{proof}
				\cref{i:norm-closed}:
				Clearly, $\Calp$ is closed. 
				Thus assume that $X$ is infinite-dimensional. 
				By \cite[Proposition~2.1]{bauschke2023projections}, for every $\gamma\in \RR$, $\overline{\{(x,y)\in X\times X |\ \scal{x}{y}=\alpha\gamma\}}^{\text{\rm weak}}=X\times X$. Thus,
				\begin{align*}
					X\times X\times \RR &=  \bigcup_{\gamma\in\RR}\big(\overline{\{(x,y)\in X\times X |\ \scal{x}{y}=\alpha\gamma\}}^{\text{\rm weak}}\times \{\gamma\}\big)\\
					& \subseteq \overline{\{(x,y,\gamma)\in X\times X\times \RR |\ \scal{x}{y}=\alpha\gamma\}}^{\text{\rm weak}}\subseteq X\times X\times \RR.
				\end{align*}
				
				\cref{i:proxregular}: 
				Set $F\colon X\times X\times\RR\to\RR\colon (x,y,\gamma)\mapsto
				\scal{x}{y}-\alpha \gamma$. 
				Then  $\Calp = F^{-1}(0)$ and $\nabla F(x,y,\gamma)= (y,x,-\alpha) \neq (0,0,0)$ because $\alpha\neq 0$. 
				The prox-regularity of $\Calp$ now follows from \cite[Example~6.8]{rockafellar_variational_1998} when $X=\RR^n$
				or from  \cite[Proposition~2.4]{bernard2005prox} in the general case. 
				Finally, the single-valuedness of the projection locally around every point 
				in $\Calp$ follows from \cite[Proposition~4.4]{bernard2005prox}.
			\end{proof}

			To study the projection onto $\Calp$, it is convenient to introduce
			\begin{empheq}[box=\mybluebox]{equation}
				\tCalp := \menge{(u,v,\gamma)\in X \times X \times \RR}{\norm{u}^2-\norm{v}^2=2\alpha\gamma},
			\end{empheq}
			which is the standard form of a rectangular hyperbolic paraboloid. 
			Define a linear operator $A:X\times X \times \RR\rightarrow X\times X \times \RR$ by sending $(u,v,\gamma)$ to $(x,y,\gamma)$, where 
			$$x=\frac{u-v}{\sqrt{2}}\quad\text{and}\quad y=\frac{u+v}{\sqrt{2}}.$$
			In terms of block matrix notation, we have 
			$$\begin{bmatrix}
				x \\
				y\\
				\gamma \end{bmatrix}=\left[ {\begin{array}{ccc}
					\frac{1}{\sqrt{2}}\Id & -\frac{1}{\sqrt{2}}\Id & 0\\
					\frac{1}{\sqrt{2}}\Id & \frac{1}{\sqrt{2}}\Id & 0\\
					0 & 0 & 1
			\end{array} } \right]  \begin{bmatrix}
				u \\
				v \\
				\gamma
			\end{bmatrix}
			\Leftrightarrow
			\begin{bmatrix}
				u \\
				v\\
				\gamma \end{bmatrix}=\left[ {\begin{array}{ccc}
					\frac{1}{\sqrt{2}}\Id & \frac{1}{\sqrt{2}}\Id & 0\\
					-\frac{1}{\sqrt{2}}\Id & \frac{1}{\sqrt{2}}\Id & 0\\
					0 & 0 & 1
			\end{array} } \right] \begin{bmatrix}
				x \\
				y\\
				\gamma
			\end{bmatrix}.$$
			Thus, we may and do identify $A$ with its block matrix representation 
			$$A= \begin{bmatrix}
				\frac{1}{\sqrt{2}}\Id & -\frac{1}{\sqrt{2}}\Id & 0\\
				\frac{1}{\sqrt{2}}\Id & \frac{1}{\sqrt{2}}\Id & 0\\
				0 & 0 & 1
			\end{bmatrix},$$
			and we denote the adjoint of $A$ by $A^{\intercal}$. 
			Note that $A$ corresponds to a rotation by $\pi/4$ about
			the $\gamma$-axis. 
			The relationship between $\Calp$ and $\tCalp$ is summarized as follows.
			
			\begin{proposition}\label{p:transform}
				The following hold:
				\begin{enumerate}
					\item \label{i:map} $A$ is a 
					surjective isometry (i.e., a unitary operator): $AA^{\intercal}=A^{\intercal}A=\Id$.
					\item \label{i:set} $A\tCalp=\Calp$ and $\tCalp=A^{\intercal}\Calp$.
					\item\label{i:proj}
					$\proj{\Calp}=A\proj{\tCalp}A^{\intercal}.$
				\end{enumerate}
			\end{proposition}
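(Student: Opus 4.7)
All three statements will follow from direct computation once the block-matrix formalism is in place, with part (i) feeding into (ii) and (iii). The one point that requires a second of care is that the inner product on $X\times X\times\RR$ is $\beta$-weighted in the last coordinate, so I must first confirm that the matrix the authors label $A^{\intercal}$ really is the adjoint of $A$ with respect to this weighted inner product.

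For \cref{i:map}, I would first verify the adjoint claim by expanding
\[
\scal{A(u,v,\gamma)}{(x',y',\gamma')} = \scal{\tfrac{u-v}{\sqrt{2}}}{x'} + \scal{\tfrac{u+v}{\sqrt{2}}}{y'} + \beta^2\gamma\gamma',
\]
regrouping to obtain $\scal{(u,v,\gamma)}{(\tfrac{x'+y'}{\sqrt{2}},\tfrac{y'-x'}{\sqrt{2}},\gamma')}$, and observing that the right-hand factor is exactly the action of the matrix written for $A^{\intercal}$ on $(x',y',\gamma')$. The $\beta^2$ weight is irrelevant because $A$ fixes the $\gamma$-coordinate. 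The identities $A^{\intercal}A=\Id$ and $AA^{\intercal}=\Id$ then reduce to the elementary calculations $\tfrac{1}{2}((u-v)+(u+v))=u$ and $\tfrac{1}{2}((u+v)-(u-v))=v$, applied blockwise.

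For \cref{i:set}, I would compute, for $(u,v,\gamma)\in X\times X\times\RR$ and $(x,y,\gamma):=A(u,v,\gamma)$,
\[
\scal{x}{y} = \scal{\tfrac{u-v}{\sqrt{2}}}{\tfrac{u+v}{\sqrt{2}}} = \tfrac{1}{2}\bigl(\|u\|^2-\|v\|^2\bigr).
\]
Hence $(u,v,\gamma)\in\tCalp$, i.e.\ $\|u\|^2-\|v\|^2=2\alpha\gamma$, is equivalent to $\scal{x}{y}=\alpha\gamma$, i.e.\ $(x,y,\gamma)\in\Calp$. This gives $A\tCalp=\Calp$; applying $A^{\intercal}$ and using \cref{i:map} yields $\tCalp=A^{\intercal}\Calp$.

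For \cref{i:proj}, I would use that $A$ is an isometry (\cref{i:map}) and the change of variable $(x,y,\gamma)=A(u,v,\gamma)$, which by \cref{i:set} puts $\Calp$ in bijection with $\tCalp$. For any $(x_0,y_0,\gamma_0)\in X\times X\times\RR$,
\[
\|A(u,v,\gamma)-(x_0,y_0,\gamma_0)\| = \|(u,v,\gamma)-A^{\intercal}(x_0,y_0,\gamma_0)\|,
\]
so minimizing the left-hand side over $(u,v,\gamma)\in\tCalp$ and minimizing $\|(x,y,\gamma)-(x_0,y_0,\gamma_0)\|$ over $(x,y,\gamma)\in\Calp$ are the same problem up to the bijection $A$; taking $\argmin$ gives
\[
\proj{\Calp}(x_0,y_0,\gamma_0) = A\,\proj{\tCalp}\bigl(A^{\intercal}(x_0,y_0,\gamma_0)\bigr),
\]
which is the claimed identity. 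There is no genuine obstacle here; the entire proof is linear algebra, and the only thing to watch is the weighted adjoint in \cref{i:map}, which is harmless because $A$ acts as the identity in the weighted direction.
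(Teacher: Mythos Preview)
Your proposal is correct and follows essentially the same route as the paper: the paper declares \cref{i:map} and \cref{i:set} ``straightforward to verify'' and proves \cref{i:proj} by exactly the isometry-plus-change-of-variable argument you outline (phrased there via $d_{\Calp}$ and $d_{\tCalp}$). Your explicit check that the block-matrix $A^{\intercal}$ is the genuine adjoint for the $\beta$-weighted inner product is a detail the paper omits but which is indeed worth recording.
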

			\begin{proof} 
				It is straightforward to verify \cref{i:map} and \cref{i:set}. 
				To show \cref{i:proj}, let $(x_{0},y_{0},\gamma_{0})\in X\times X\times\RR$.
				In view of \cref{i:map} and \cref{i:set},
				we have
				$(x,y,\gamma)\in \proj{\Calp}(x_{0},y_{0},\gamma_{0})$ if and
				only if $(x,y,\gamma)\in \Calp$ and
				\begin{equation*}
					\|(x,y,\gamma)-(x_{0},y_{0},\gamma_{0})\|=
					d_{\Calp}(x_{0},y_{0},\gamma_{0})=d_{A\tCalp}(x_{0},y_{0},\gamma_{0})=
					d_{\tCalp}(A^{\intercal}[x_{0},y_{0},\gamma_{0}]^\intercal), 
				\end{equation*}
				and this is equivalent to 
				$$\|A^{\intercal}[x,y,\gamma]^\intercal-A^{\intercal}[x_{0},y_{0},\gamma_{0}]^\intercal\|=
				d_{\tCalp}(A^{\intercal}[x_{0},y_{0},\gamma_{0}]^\intercal).$$
				Since $A^{\intercal}[x,y,\gamma]^\intercal\in\tCalp$, this gives
				$A^{\intercal}[x,y,\gamma]^\intercal\in \proj{\tCalp}(A^{\intercal}[x_{0},y_{0},\gamma_{0}]^\intercal)$, i.e.,
				$[x,y,\gamma]^\intercal\in A\proj{\tCalp}(A^{\intercal}[x_{0},y_{0},\gamma_{0}]^\intercal)$.
				The converse inclusion is proved similarly. 
			\end{proof}
			
			Exploiting the structure of $\tCalp$ is crucial for showing the existence
			of $\proj{\tCalp}(u_{0},v_{0},\gamma_{0})$ for every
			$(u_{0},v_{0},\gamma_{0})\in X\times X\times \RR$.
			
			\begin{proposition} {\bf (existence of the projection)} 
				\label{p:three:dim}
				Let 
				$(u_0,v_0,\gamma_{0}) \in X\times X\times \RR$. 
				Then  the minimization problem
				\begin{subequations}
					\label{e:simple}
					\begin{align}
						\textrm{minimize} \quad  f(u,v,\gamma) &:=\|u-u_{0}\|^2+\|v-
						v_{0}\|^2 +\beta^{2}|\gamma-\gamma_{0}|^2 \label{e:simple1}\\
						\textrm{subject to} \quad  \  h(u,v,\gamma) &:=\norm{u}^2 -\norm{v}^2-2\alpha\gamma =0
						\label{e:simple2}
					\end{align}
				\end{subequations}
				always has a solution, i.e., $\proj{\tCalp}(u_{0},v_{0},\gamma_{0})\neq\varnothing$.
				If $(u,v,\gamma)\in \proj{\tCalp}(u_{0},v_{0},\gamma_{0})$, then 
				$u,u_{0}$ are conically dependent, 
				and $v,v_{0}$ are also conically dependent. 
			\end{proposition}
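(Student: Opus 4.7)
The plan is to exploit the rotational symmetry of the problem in the $u$ and $v$ arguments separately, reducing the (infinite-dimensional) minimization to a three-real-parameter problem where classical compactness arguments apply. I would first establish the conical dependence assertion, which then drives the existence proof. Observe that at any feasible $(u,v,\gamma)\in\tCalp$, the constraint \eqref{e:simple2} determines $\|u\|^2=\|v\|^2+2\alpha\gamma=:r^2\geq 0$; replacing $u$ by any other $\tilde u$ with $\|\tilde u\|=r$ keeps the triple feasible. Among such $\tilde u$, the quantity $\|\tilde u-u_0\|^2=r^2-2\scal{\tilde u}{u_0}+\|u_0\|^2$ is minimized precisely where $\scal{\tilde u}{u_0}$ is maximal, which by Cauchy--Schwarz forces $\tilde u=r u_0/\|u_0\|$ when $u_0\neq 0$, while if $u_0=0$ or $r=0$ conical dependence is automatic. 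Symmetrically, fixing $(u,\gamma)$ and varying $v$ on the forced sphere yields the same conclusion for $v,v_0$. Applied to any minimizer, this gives precisely the stated conical dependence.

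For existence, I would pick unit vectors $e_1,e_2\in X$ with $e_1:=u_0/\|u_0\|$ if $u_0\neq 0$ (arbitrary unit vector otherwise), and analogously for $e_2$, and consider the \emph{reduced} problem on $\RR^3$:
\[
\text{minimize}\ \ (t-\|u_0\|)^2+(s-\|v_0\|)^2+\beta^2(\gamma-\gamma_0)^2\ \ \text{subject to}\ \ t\geq 0,\ s\geq 0,\ t^2-s^2=2\alpha\gamma.
\]
Here setting $u:=te_1$ and $v:=se_2$ gives $\|u-u_0\|^2=(t-\|u_0\|)^2$ and $\|v-v_0\|^2=(s-\|v_0\|)^2$ in all cases. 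The objective is continuous and coercive on $\RP\times\RP\times\RR$ and the feasible set is closed, so a minimizer $(t^*,s^*,\gamma^*)$ exists in $\RR^3$; setting $(u^*,v^*,\gamma^*):=(t^*e_1,s^*e_2,\gamma^*)$ then produces a feasible point of the original problem. To see that it is optimal, note that for any $(u,v,\gamma)\in\tCalp$ the rearranged triple $(\|u\|e_1,\|v\|e_2,\gamma)$ again lies in $\tCalp$ and, by the sphere-projection estimate of the previous paragraph, has no greater objective value. Hence the reduced infimum equals the full infimum and $(u^*,v^*,\gamma^*)$ realizes the projection.

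The main obstacle is that $\tCalp$ is not weakly closed in infinite dimensions (as seen in the preceding proposition), so a direct weak-compactness argument applied to a bounded minimizing sequence breaks down. What rescues the argument is that both the objective and the constraint depend on $(u,v)$ only through $\|u\|$, $\|v\|$, $\scal{u}{u_0}$, and $\scal{v}{v_0}$; this intrinsic low-dimensionality makes the symmetric rearrangement onto $\RP e_1\times\RP e_2\times\RR$ admissible and reduces everything to a finite-dimensional Weierstrass problem, sidestepping the lack of weak closure entirely.
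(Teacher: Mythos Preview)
Your proposal is correct and follows essentially the same approach as the paper: both exploit Cauchy--Schwarz to show that, with $\|u\|$ and $\|v\|$ fixed, the objective is minimized when $u,u_0$ and $v,v_0$ are conically dependent, thereby reducing the problem to a coercive continuous minimization over three real parameters. Your version is slightly more polished in that it treats all cases ($u_0=0$ or not) uniformly via the unit-vector parametrization and makes explicit why the reduced infimum coincides with the original one, whereas the paper only writes out the case $u_0\neq 0$, $v_0\neq 0$ and leaves the rest as ``similar.''
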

			
			\begin{proof} We only illustrate the case 
				when $u_{0}\neq 0, v_{0}\neq 0$, since
				the other cases are similar. 
				We claim that the optimization problem is essentially
				$3$-dimensional. To this end, we expand
				\begin{align}
					\label{e:220603b}
					f(u,v,\gamma) &
					=\underbrace{\|u\|^2-2\scal{u}{u_{0}}+\|u_{0}\|^2}+\underbrace{\|v\|^2-2\scal{v}{v_{0}}+
						\|v_{0}\|^2}+\beta^2|\gamma-\gamma_{0}|^2.
				\end{align}
				The constraint
				$$h(u,v,\gamma)=\norm{u}^2 -\norm{v}^2-2\alpha \gamma =0$$
				means that for the  variables $u,v$ 
				only the norms $\|u\|$ and $\|v\|$ matter.
				With $\|u\|$ fixed, the Cauchy-Schwarz inequality in Hilbert space 
				(see, e.g., \cite{kreyszig1991introductory}),
				shows that $-2\scal{u}{u_{0}}$ in the left underbraced part of 
				\cref{e:220603b}
				will be smallest when 
				$u,u_{0}$ are conically dependent. 
				Similarly, for fixed $\|v\|$, the second underlined part in
				$f$ will be smaller when
				$v=t v_{0}$ for some $t\geq 0$. It follows that the optimization
				problem given by  \cref{e:simple}  is equivalent to
				\begin{subequations}
					\label{e:sunday}
					\begin{align}
						\textrm{minimize}\quad  g(s,t,\gamma)&:=(1-s)^2\|u_{0}\|^2+(1-t)^2\|
						v_{0}\|^2 +\beta^2|\gamma-\gamma_{0}|^2\label{e:sunday1}\\
						\textrm{subject to}\quad  g_{1}(s,t,\gamma)&:=s^2\norm{u_{0}}^2 -
						t^2\norm{v_{0}}^2-2\alpha\gamma =0,\quad s\geq 0, t\geq 0,\gamma\in\RR.\label{e:sunday2}
					\end{align}
				\end{subequations}
				Because $g$ is continuous and coercive, and $g_{1}$ is
				continuous, we conclude that the optimization problem \cref{e:sunday} 
				has a solution. 
			\end{proof}
			
			Next, we provide a result on set convergence and review graphical convergence, see, e.g., \cite{rockafellar_variational_1998,
				aubin2009set}.
			We shall need the \emph{cross} 
			\begin{empheq}[box=\mybluebox]{equation}
				C:= \menge{(x,y)\in X \times X}{\scal{x}{y}=0},
			\end{empheq}
			which was studied in, e.g., \cite{bauschke2022projection},
			as well as 
			\begin{empheq}[box=\mybluebox]{equation}
				\Ct:=\menge{(u,v)\in X\times X}{\|u\|^2-\|v\|^2=0}.
			\end{empheq}
			
			\begin{proposition}\label{p:setconvergence} The following hold:
				\begin{enumerate}
					\item\label{i:uv}
					$\lim_{\alpha\rightarrow 0}\tCalp =\Ct\times\RR$.
					\item\label{i:xy}
					$\lim_{\alpha\rightarrow 0}\Calp=C\times\RR$.
				\end{enumerate}
			\end{proposition}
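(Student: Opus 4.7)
The plan is to verify that both the Painlev\'e--Kuratowski outer and inner limits of the parametrized family equal the prescribed set. For \cref{i:uv}, I would fix an arbitrary sequence $\alpha_n\to 0$ with $\alpha_n\neq 0$ and handle the two inclusions separately.

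\textbf{Outer limit.} If $(u_n,v_n,\gamma_n)\in \widetilde{C}_{\alpha_n}$ converges to $(u,v,\gamma)$, then the sequence $(\gamma_n)$ is bounded, so
\[
\|u\|^2-\|v\|^2=\lim_n\bigl(\|u_n\|^2-\|v_n\|^2\bigr)=\lim_n 2\alpha_n\gamma_n=0,
\]
placing $(u,v,\gamma)$ in $\widetilde{C}\times\RR$.

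\textbf{Inner limit.} Given $(u,v,\gamma)\in\widetilde C\times\RR$, I would build an approximating sequence by splitting on whether $u=v=0$. If $u\neq 0$ (equivalently, $v\neq 0$, since $\|u\|=\|v\|$), set $\gamma_n:=\gamma$, $v_n:=v$, and $u_n:=t_n u$ with $t_n:=\sqrt{1+2\alpha_n\gamma/\|u\|^2}$, which is real for $n$ large. A direct computation gives $\|u_n\|^2-\|v_n\|^2=2\alpha_n\gamma=2\alpha_n\gamma_n$, so $(u_n,v_n,\gamma_n)\in\widetilde C_{\alpha_n}$, and $t_n\to 1$ yields the desired convergence. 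If $u=v=0$, fix any unit vector $e\in X$, set $\gamma_n:=\gamma$, and for each $n$ choose
\[
(u_n,v_n):=\begin{cases}\bigl(\sqrt{2\alpha_n\gamma}\,e,\,0\bigr), & \alpha_n\gamma\geq 0,\\ \bigl(0,\,\sqrt{-2\alpha_n\gamma}\,e\bigr), & \alpha_n\gamma<0.\end{cases}
\]
Both components go to $0$ while the constraint is satisfied.

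\textbf{Transporting to \cref{i:xy}.} I would deduce the second assertion from the first using the unitary $A$ from \cref{p:transform}. Since $A$ and $A^\intercal$ are continuous linear bijections, Painlev\'e--Kuratowski set convergence is preserved under $A$; combined with $\Calp=A\widetilde C_\alpha$ this gives $\lim_{\alpha\to 0}\Calp=A\bigl(\widetilde C\times\RR\bigr)$. It remains to identify $A(\widetilde C\times\RR)$ with $C\times\RR$, which follows from the elementary identity $\scal{(u-v)/\sqrt 2}{(u+v)/\sqrt 2}=(\|u\|^2-\|v\|^2)/2$: the latter vanishes precisely when $\|u\|=\|v\|$.

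The only subtlety is the degenerate case $u=v=0$ in the inner-limit construction, where one must track the sign of $\alpha_n\gamma$; otherwise the argument is a routine passage to the limit in the defining equation, followed by a change of coordinates.
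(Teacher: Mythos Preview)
Your proof is correct and follows essentially the same strategy as the paper: the outer limit is handled by passing to the limit in the defining equation, the inner limit by an explicit approximating sequence (with a separate sign-dependent construction in the degenerate case $u=v=0$), and part \cref{i:xy} is transported from \cref{i:uv} via the unitary $A$. The only cosmetic difference is that in the nondegenerate inner-limit construction the paper rescales both $u$ and $v$ symmetrically by $\sqrt{1\pm\lambda}$, whereas you fix $v$ and rescale only $u$; your version is slightly leaner and avoids the paper's extra case split on $\gamma=0$.
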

			
			\begin{proof} \cref{i:uv}: 
				First we show that $\limsup_{\alpha\rightarrow 0}\tCalp\subseteq \Ct\times\RR $.
				Let $(u_\alpha, v_{\alpha}, \gamma_{\alpha})\rightarrow (u,v,\gamma)$ and $(u_\alpha, v_{\alpha}, \gamma_{\alpha})\in
				\tCalp$ with $\alpha\rightarrow 0$. Then
				$\|u_{\alpha}\|^2-\|v_{\alpha}\|^2=2\alpha\gamma_{\alpha}$ gives
				$\|u\|^2-\|v\|^2=0$ when $\alpha\rightarrow 0$, so $(u,v,\gamma)\in\Ct\times\RR$.
				
				Next we show $\Ct\times\RR\subseteq \liminf_{\alpha\rightarrow 0}\tCalp$. Let $(u,v,\gamma)\in\Ct\times\RR$,
				i.e., $\|u\|^2-\|v\|^2=0$ and $\gamma\in\RR$.
				Let $\varepsilon>0$. 
				We consider three cases:
				
				Case 1: $\gamma=0$. Then $(u_{\alpha}, v_{\alpha}, 0)=(u,v,0)\in\tCalp$ for
				every $\alpha$.
				
				Case 2: $\gamma\neq 0$ but $(u,v)=(0,0).$ If $\alpha\gamma>0$, take
				$(u_{\alpha},0,\gamma)$ with $\|u_{\alpha}\|^2-0=\alpha\gamma$ so that
				$(u_{\alpha}, 0, \gamma)\in C_{\alpha}$; if $\alpha\gamma <0$, take
				$(0,v_{\alpha},\gamma)$ with $0-\|v_{\alpha}\|^2=\alpha\gamma$ so that
				$(0, v_{\alpha},\gamma)\in C_{\alpha}$. Then
				$$\|(u_{\alpha}, 0, \gamma)-(0,0,\gamma)\|=\|u_{\alpha}\|=\sqrt{|\alpha\gamma|}<\varepsilon,$$
				or
				$$\|(0, v_{\alpha},\gamma)-(0,0,\gamma)\|=\|v_{\alpha}\|=\sqrt{|\alpha\gamma|}<\varepsilon,$$
				if $|\alpha|<\varepsilon^2/|\gamma|.$
				
				Case 3: $\gamma\neq 0$ and $(u,v)\neq (0,0)$. 
				Take $\alpha\in\RR$ such that 
				$$|\alpha|<\min\left\{\frac{\varepsilon\|(u,v)\|}{|\gamma|}, \frac{\|(u,v)\|^2}{|\gamma|}\right\},$$
				and set 
				$$\lambda:=\frac{\alpha\gamma}{\|(u,v)\|^2}.$$
				Then
				$$|\lambda|=\frac{|\alpha\gamma|}{\|(u,v)\|^2}<1.$$
				Now set 
				$$u_{\alpha}:=\sqrt{1+\lambda}u, \quad v_{\alpha}:=\sqrt{1-\lambda}v.$$
				Then
				\begin{align*}
					\|u_{\alpha}\|^2-\|v_{\alpha}\|^2 &= (1+\lambda)\|u\|^2-(1-\lambda)\|v\|^2\\
					&=\lambda(\|u\|^2+\|v\|^2)=\alpha\gamma,
				\end{align*}
				so that $(u_{\alpha},v_{\alpha},\gamma)\in \tCalp$
				and
				\begin{align*}
					\|(u_{\alpha}, v_{\alpha},\gamma)-(u,v,\gamma)\| &=\sqrt{(\sqrt{1+\lambda}-1)^2\|u\|^2+
						(\sqrt{1-\lambda}-1)^2\|v\|^2}\\
					&= \sqrt{\frac{\lambda^2}{(1+\sqrt{1+\lambda})^2}\|u\|^2+
						\frac{\lambda^2}{(1+\sqrt{1-\lambda})^2}\|v\|^2}\\
					&\leq \sqrt{\lambda^2(\|u\|^2+\|v\|^2)}=|\lambda|\|(u,v)\|<\varepsilon.
				\end{align*}
				
				\cref{i:xy}: This follows from \ref{i:uv} because that $\Calp=A\tCalp$ and $C\times\RR=A(\Ct\times \RR)
				$ and that $A$ is an isometry.  
				See also \cite[Theorem~4.26]{rockafellar_variational_1998}. 
			\end{proof}

			\begin{definition}
				{\bf (graphical limits of mappings)} (See \cite[Definition~5.32]{rockafellar_variational_1998}.)
				For a sequence of set-valued mappings $S^{k}:\RR^n\rightrightarrows \RR^m$,
				we say $S^k$ converges graphically to $S$, in symbols $S^{k}\graphc S$, if for every $x\in\RR^n$ one has
				$$\bigcup_{\{x^{k}\rightarrow x\}}\limsup_{k\rightarrow\infty}
				S^{k}(x^k)\subseteq S(x)\subseteq \bigcup_{\{x^{k}\rightarrow x\}}\liminf_{k\rightarrow\infty}
				S^{k}(x^k).$$
			\end{definition}

			\begin{fact} {\bf (Rockafellar--Wets)} 
				(See \cite[Example~5.35]{rockafellar_variational_1998}.)\label{f:projconvergence}
				For closed subsets sets $S^{k}, S$ of $\RR^n$, 
				one has $P_{S^{k}} \graphc P_{S}$ if and only if 
				$S^ {k}\to S$.
			\end{fact}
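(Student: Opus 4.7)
The plan is to reduce both directions to the classical equivalence (valid in $\RR^n$) between set convergence $S^k\to S$ and pointwise convergence of the distance functions $d_{S^k}\to d_S$; combined with the characterization $P_T(x)=\argmin_{z\in T}\|z-x\|$, this is the bridge between the set-theoretic and analytic sides of the statement.

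For the direction ($\Leftarrow$), I would assume $S^k\to S$ and fix $x\in\RR^n$. For the $\limsup$ half of the graphical-convergence definition, I would take any $x^k\to x$ and $z\in\limsup_k P_{S^k}(x^k)$, pass to a subsequence with $z^k\in P_{S^k}(x^k)$, $z^k\to z$, and use $\limsup_k S^k\subseteq S$ to conclude $z\in S$. For each competitor $w\in S$, the inclusion $S\subseteq\liminf_k S^k$ supplies $w^k\in S^k$ with $w^k\to w$, and passing the optimality $\|z^k-x^k\|\leq\|w^k-x^k\|$ to the limit gives $z\in P_S(x)$. For the $\liminf$ half, given $z\in P_S(x)$ I need some $x^k\to x$ along which a selection of $P_{S^k}(x^k)$ tends to $z$: pick $z^k_0\in S^k$ with $z^k_0\to z$, perturb by setting $x^k:=x+(z^k_0-z)$, so that $\|z^k_0-x^k\|=\|z-x\|$, and use $d_{S^k}(x^k)\to d_S(x)=\|z-x\|$ to conclude that any exact projection $\tilde z^k\in P_{S^k}(x^k)$ is bounded with every cluster point in $P_S(x)$. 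When $P_S(x)$ is not a singleton, a further perturbation along the ray from $z$ through $x$ (which makes $z$ the unique nearest point of $S$) is used to isolate the limit as $z$.

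For the direction ($\Rightarrow$), I would assume $P_{S^k}\graphc P_S$ and exploit the tautology that $y\in T$ is equivalent to $y\in P_T(y)$ for any closed~$T$. If $x\in\limsup_k S^k$, a subsequence $x^{k_j}\in S^{k_j}$ converges to $x$, and since $x^{k_j}\in P_{S^{k_j}}(x^{k_j})$, the $\limsup$ inclusion in graphical convergence yields $x\in P_S(x)$, hence $x\in S$. Conversely, if $x\in S$ then $x\in P_S(x)$; the $\liminf$ inclusion supplies $x^k\to x$ and $z^k\in P_{S^k}(x^k)\subseteq S^k$ with $z^k\to x$, which witnesses $x\in\liminf_k S^k$.

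The main obstacle is the $\liminf$ inclusion in the forward direction when $P_S(x)$ has more than one element: one has to steer the perturbation so that a \emph{specific} prescribed point $z\in P_S(x)$ becomes the chosen limit of projections rather than some rival minimizer, which is why a plain ``$x^k:=x+(z^k_0-z)$'' is not by itself enough and an auxiliary shift toward $z$ is required. Reassuringly, the prox-regularity of $\Calp$ established earlier guarantees that the projection onto $\Calp$ is locally single-valued, so this delicate point dissolves in the concrete setting of the paper.
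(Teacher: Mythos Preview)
The paper does not prove this statement: it is recorded as a Fact with a bare citation to \cite[Example~5.35]{Rock98} and then invoked as a black box. So there is no in-paper argument to compare your proposal against; you have already written strictly more than the authors do.

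Your outline is sound and follows the standard route. The $(\Rightarrow)$ direction and the $\limsup$ half of $(\Leftarrow)$ are clean. For the $\liminf$ half of $(\Leftarrow)$ you correctly isolate the only genuine difficulty---forcing the cluster point of the approximate projections to be the \emph{prescribed} $z\in P_S(x)$ when $P_S(x)$ is not a singleton---and your remedy is the right one: for any $x'$ in the open segment $(z,x)$ one has $P_S(x')=\{z\}$ (this holds for arbitrary closed $S$ in $\RR^n$, by a triangle-inequality argument), so perturbing $x$ toward $z$ and then diagonalizing over the perturbation parameter and $k$ finishes the construction. That diagonalization is only hinted at in your sketch, but it is routine.

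One correction to your closing remark: the prox-regularity of $\Calp$ is not the relevant reassurance here. In the paper the Fact is applied with $S^k=\Calp$ (resp.\ $\tCalp$) as $\alpha\to 0$ and limit $S=C\times\RR$ (resp.\ $\Ct\times\RR$); the projection onto the cross $C$ is genuinely multivalued (for instance at points $(x_0,y_0)$ with $\|x_0\|=\|y_0\|\neq 0$), so the ``delicate point'' does \emph{not} dissolve in the paper's concrete application, and the general argument really is needed.
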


			We are now ready for our main results which we will derive in the next section. 
			
			\section{Projection onto a rectangular hyperbolic paraboloid}

			\label{sec:main}
			
			We begin with projections onto rectangular hyperbolic paraboloids.
			In view of \cref{p:transform}\cref{i:proj}, to find
			$\proj{\Calp}$ it suffices to find $\proj{\tCalp}$. That is,
			for every $(u_0, v_{0},\gamma_{0})\in X\times X\times \RR$,
			we need to solve:
			\begin{subequations}
				\label{e:findproj}
				\begin{align}
					\min_{u,v,\gamma} \quad   f(u,v,\gamma)&:=\norm{u-u_0}^2 + \norm{v-v_0}^2 + \beta^2 |\gamma-\gamma_{0}|^2\\
					\textrm{subject to} \quad  h(u,v,\gamma)&:=\norm{u}^2 -\norm{v}^2-2\alpha \gamma=0.
				\end{align}
			\end{subequations}
			
			\begin{theorem}
				\label{p:2011d4.10}
				Let $(u_0,v_0,\gamma_{0}) \in X\times X\times\RR$. Then the following hold:
				\begin{enumerate}
					\item \label{p:2011d4.10a}
					When $u_0 \neq 0,v_0\neq 0$, then 
					\begin{align}
						\label{e:proj_formula}
						\proj{\tCalp}(u_0,v_0,\gamma_{0}) 
						= 
						\bigg\{ \Big(\dfrac{u_0}{1+\lambda}, \dfrac{v_0}{1-\lambda}, \gamma_0+\dfrac{\lambda\alpha}{\beta^2}\Big)\bigg\},
					\end{align}
					where the unique
					$\lambda \in\left]-1,1\right[$ solves the following (essentially) 
					quintic equation
					\begin{align}
						\label{eq:quintic}
						g(\lambda):= \frac{(\lambda^2 + 1 ) p - 2\lambda q}{(1-\lambda^2)^2} -
						\frac{2\lambda\alpha^2}{\beta^2} - 2\alpha \gamma_0= 0,
					\end{align}
					and where $p :=\norm{u_0}^2 - \norm{v_0}^2$ and $q :=\norm{u_0}^2 + \norm{v_0}^2$.
					
					\item  \label{p:2011d4.10b}
					When $u_0 = 0,v_0\neq 0$, we have:
					\begin{enumerate}
						\item \label{p:2011d4.10b(a)} If $\alpha(\gamma_{0}-\frac{\alpha}{\beta^2})<-\frac{\|v_{0}\|^2}{8}$, then 
						\begin{equation}
							\label{e:220605a} \proj{\tCalp}(0,v_0,\gamma_0) =
							\bigg\{\Big(
							0,\dfrac{v_{0}}{1-\lambda}, \gamma_{0}+\dfrac{\lambda\alpha}{\beta^2}
							\Big) \bigg\}, 
						\end{equation}
						for a unique $\lambda\in\left]-1, 1\right[$ that
						solves the (essentially) cubic equation
						\begin{align}
							\label{eq:cubic1}
							g_1(\lam) := \frac{\|v_{0}\|^2}{(1-\lambda)^2}+\frac{2\lambda\alpha^2}{\beta^2}
							+2\alpha\gamma_{0}
							=0.
						\end{align}
						
						\item \label{p:2011d4.10b(b)} If  $\alpha(\gamma_{0}-\frac{\alpha}{\beta^2})\geq -\frac{\|v_{0}\|^2}{8}$, then 
						\begin{equation}
							\label{e:220605b}\proj{\tCalp}(0,v_0,\gamma_0)=
							\mmenge{\Big(u,\frac{v_0}{2},\gamma_0-\frac{\alpha}{\beta^2}\Big)}{ 
								\|u\|=\sqrt{{2\alpha\Big(\gamma_{0}-\frac{\alpha}{\beta^2}\Big)+\frac{\|v_{0}\|^2}{4}}}, u\in X}, 
						\end{equation}
						which is a singleton if and only if 
						$\alpha(\gamma_{0}-\frac{\alpha}{\beta^2})= -\frac{\|v_{0}\|^2}{8}$. 
					\end{enumerate}
					
					\item 
					\label{p:2011d4.10c}
					When $u_0\neq 0, v_0 = 0$, we have:
					\begin{enumerate}
						\item If $\alpha(\gamma_{0}+\frac{\alpha}{\beta^2})>\frac{\|u_{0}\|^2}{8}$, then 
						\begin{equation}  \proj{\tCalp}(u_0,0,\gamma_0) =
							\bigg\{\Big(\dfrac{u_{0}}{1+\lambda},0, \gamma_{0}+\dfrac{\lambda\alpha}{\beta^2}\Big) \bigg\}
						\end{equation}
						for a unique $\lambda\in\left]-1, 1\right[$ that solves
						the (essentially) cubic equation 
						\begin{align}
							\label{eq:cubic2}
							g_2(\lam) := \frac{\|u_{0}\|^2}{(1+\lambda)^2}-\frac{2\lambda\alpha^2}{\beta^2}-2\alpha\gamma_{0}=0.
						\end{align}
						\item 
						If  $\alpha(\gamma_{0}+\frac{\alpha}{\beta^2})\leq \frac{\|u_{0}\|^2}{8}$, 
						then 
						\begin{equation}
							\proj{\tCalp}(u_0,0,\gamma_0)=
							\mmenge{\Big(\dfrac{u_{0}}{2},v,\gamma_{0}+\dfrac{\alpha}{\beta^2} \Big)}{\|v\|=\sqrt{-2\alpha\Big(\gamma_{0}+\frac{\alpha}{\beta^2}\Big)+\frac{\|u_{0}\|^2}{4}}, v\in X},
						\end{equation}
						which is a singleton if and only if 
						$\alpha(\gamma_{0}+\frac{\alpha}{\beta^2})= \frac{\|u_{0}\|^2}{8}$.
					\end{enumerate}
					\item \label{p:2011d4.10d}
					When $u_0=0, v_0=0$, we have: 
					\begin{enumerate}
						\item \label{d4.10da} If $\alpha\gamma_0>\frac{\alpha^2}{\beta^2}$,
						then the projection is the non-singleton set 
						\begin{equation}
							\label{e:220605c}
							\proj{\tCalp}(0,0,\gamma_0)=
							\mmenge{\Big(u,0, \gamma_0-\dfrac{\alpha}{\beta^2}\Big)}{\ \|u\|=
								\sqrt{2\alpha\Big(\gamma_0-\frac{\alpha}{\beta^2}\Big)},\, u\in X}
							.
						\end{equation}
						\item \label{d4.10db} 
						If $|\alpha\gamma_0|\leq \frac{\alpha^2}{\beta^2}$,
						then 
						\begin{equation}\proj{\tCalp}(0,0,\gamma_0)=
							\big\{(0,0,0) \big\}.\end{equation}
						\item \label{d4.10de} If $\alpha\gamma_0<-\frac{\alpha^2}{\beta^2}$,
						then the projection is the non-singleton set
						\begin{equation}\proj{\tCalp}(0,0,\gamma_0)=
							\mmenge{\Big(0,v,\gamma_0+\dfrac{\alpha}{\beta^2} \Big)}{\|v\|=\sqrt{-2\alpha\Big(\gamma_0+\frac{\alpha}{\beta^2}\Big)},\, v\in X}.
						\end{equation}
					\end{enumerate}
				\end{enumerate}
			\end{theorem}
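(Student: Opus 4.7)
The proof proceeds by Lagrange multipliers applied to \cref{e:findproj}. Since $\nabla h(u,v,\gamma)=(2u,-2v,-2\alpha)\neq 0$ for every $(u,v,\gamma)$ (because $\alpha\neq 0$), every minimizer is a regular KKT point, so there exists $\lambda\in\RR$ with
\[
(1+\lambda)u = u_0,\qquad (1-\lambda)v = v_0,\qquad \gamma = \gamma_0+\frac{\lambda\alpha}{\beta^2},
\]
together with the constraint $\|u\|^2-\|v\|^2=2\alpha\gamma$. Existence of a minimizer is guaranteed by \cref{p:three:dim}, which also supplies the conical-dependence information that pins down the admissible range of $\lambda$. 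I will treat the four cases in order.

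For \cref{p:2011d4.10a}, where $u_0,v_0\neq 0$, the first two KKT equations together with conical dependence force $1+\lambda>0$ and $1-\lambda>0$; otherwise $u=u_0/(1+\lambda)$ or $v=v_0/(1-\lambda)$ would be a negative multiple of $u_0$ or $v_0$. Substituting the KKT formulas into the constraint and simplifying yields precisely \cref{eq:quintic}. For existence and uniqueness of a root on $(-1,1)$, I plan to rewrite
\[
g(\lambda)=\frac{\|u_0\|^2}{(1+\lambda)^2}-\frac{\|v_0\|^2}{(1-\lambda)^2}-\frac{2\lambda\alpha^2}{\beta^2}-2\alpha\gamma_0
\]
and observe that
\[
g'(\lambda)=-\frac{2\|u_0\|^2}{(1+\lambda)^3}-\frac{2\|v_0\|^2}{(1-\lambda)^3}-\frac{2\alpha^2}{\beta^2}<0
\]
on $(-1,1)$, while $g(\lambda)\to+\infty$ as $\lambda\to -1^+$ and $g(\lambda)\to-\infty$ as $\lambda\to 1^-$. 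The intermediate value theorem together with strict monotonicity yields a unique root, and this single KKT candidate must coincide with the minimizer.

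For \cref{p:2011d4.10b}, where $u_0=0$ and $v_0\neq 0$, the equation $(1+\lambda)u=0$ forces a dichotomy. \textbf{Branch A} is $u=0$, in which case $\lambda\in(-1,1)$ is determined by the cubic $g_1(\lambda)=0$ of \cref{eq:cubic1}. \textbf{Branch B} is $\lambda=-1$, giving $v=v_0/2$, $\gamma=\gamma_0-\alpha/\beta^2$, and a constraint that reduces to $\|u\|^2=2\alpha(\gamma_0-\alpha/\beta^2)+\|v_0\|^2/4$, feasible iff the right-hand side is nonnegative, i.e., iff $\alpha(\gamma_0-\alpha/\beta^2)\geq -\|v_0\|^2/8$. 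A short computation gives $g_1'(\lambda)=2\|v_0\|^2/(1-\lambda)^3+2\alpha^2/\beta^2>0$ on $(-1,1)$, so Branch A admits a root iff $g_1(-1^+)<0$, which is precisely the complementary condition $\alpha(\gamma_0-\alpha/\beta^2)<-\|v_0\|^2/8$. Hence exactly one branch is feasible, and its KKT point(s) constitute the projection, matching \cref{p:2011d4.10b(a),p:2011d4.10b(b)}. Case \cref{p:2011d4.10c} is handled symmetrically. For \cref{p:2011d4.10d}, where $u_0=v_0=0$, the KKT equations $(1+\lambda)u=0$ and $(1-\lambda)v=0$ force $u=0$ or $\lambda=-1$, and $v=0$ or $\lambda=1$, so at least one of $u,v$ vanishes; the three sub-cases $u=v=0$, $(u\neq 0,\,v=0)$, and $(u=0,\,v\neq 0)$ each reduce to a one-variable problem, producing the trichotomy in $\alpha\gamma_0$ versus $\pm\alpha^2/\beta^2$.

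The main obstacle will be the careful case analysis in parts (ii)--(iv), in particular verifying that the feasibility conditions for the various KKT branches are genuinely complementary, so that the projection is uniquely described by exactly one formula (with the shared boundary case corresponding to a sphere of radius zero). Monotonicity of the one-variable functions $g$, $g_1$, $g_2$ on $(-1,1)$, combined with the a priori existence granted by \cref{p:three:dim}, is the essential tool that converts the KKT necessary conditions into a complete description of the projection.
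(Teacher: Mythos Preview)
Your treatment of \cref{p:2011d4.10a} is fine and essentially matches the paper (your rewriting of $g$ as $\|u_0\|^2/(1+\lambda)^2-\|v_0\|^2/(1-\lambda)^2-\cdots$ is in fact cleaner than the paper's manipulation with $p$ and $q$).

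The gap is in \cref{p:2011d4.10b} (and, by the same reasoning, in \cref{p:2011d4.10c,p:2011d4.10d}). You assert that the two KKT branches are ``genuinely complementary'', but this is false. In Branch~A ($u=0$) the conical-dependence result from \cref{p:three:dim} only forces $v=v_0/(1-\lambda)$ with $1-\lambda>0$, i.e.\ $\lambda<1$; there is no reason to restrict $\lambda$ to $(-1,1)$. Since $g_1$ is strictly increasing on the whole half-line $(-\infty,1)$ with $g_1(-\infty)=-\infty$ and $g_1(1^-)=+\infty$, it has a \emph{unique} root there, lying in $(-1,1)$ precisely when $g_1(-1)<0$ and in $(-\infty,-1)$ when $g_1(-1)>0$. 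Thus in the regime $\alpha(\gamma_0-\alpha/\beta^2)>-\|v_0\|^2/8$ Branch~A still produces a legitimate KKT point (with $\lambda<-1$) \emph{in addition to} the Branch~B candidates. Both satisfy the constraint and the conical-dependence conclusion of \cref{p:three:dim}, so neither can be discarded a priori.

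What is actually needed---and what the paper does---is an explicit comparison of the objective values: one computes $f$ at the Branch~B point and at the Branch~A point with $\lambda<-1$, reduces their difference (using $g_1(\lambda)=0$ to eliminate $\|v_0\|^2/(1-\lambda)^2$) to the inequality $\alpha(\gamma_0+\lambda\alpha/\beta^2)<\alpha^2/\beta^2$, and checks that this holds. The same issue recurs in \cref{p:2011d4.10d}: for instance when $\alpha\gamma_0>\alpha^2/\beta^2$ both the candidate $(0,0,0)$ and the sphere $\{(u,0,\gamma_0-\alpha/\beta^2):\|u\|^2=2\alpha(\gamma_0-\alpha/\beta^2)\}$ are KKT points, and one must compare $\beta^2\gamma_0^2$ with $2\alpha\gamma_0-\alpha^2/\beta^2$ to see which wins. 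Your outline needs these comparisons; monotonicity of $g_1$ on $(-1,1)$ alone does not decide the projection.
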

			\begin{proof}
				Observe that $\nabla f(u,v,\gamma) = (2(u-u_{0}),2(v-v_{0}),2\beta^2(\gamma-\gamma_0))$ and $\nabla h(u,v,\gamma)=(2u,-2v,-2\alpha)$.
				Since $\alpha\neq0$,
				we have $\forall (u,v,\gamma) \in X\times X \times \RR,~\nabla h(u,v,\gamma) \neq 0$.
				Using \cite[Proposition~4.1.1]{bertsekas1997nonlinear}, we obtain
				the following KKT
				optimality conditions of \cref{e:findproj}:
				\begin{subequations}
					\label{mall}
					\begin{align}
						(1+\lambda)u=u_{0}\label{m2}\\
						(1-\lambda)v=v_{0}\label{m3}\\
						\beta^2(\gamma-\gamma_{0}) - \lambda \alpha = 0\label{m4}\\
						\norm{u}^2 - \norm{v}^2-2\alpha \gamma = 0 \label{m1}
					\end{align}
				\end{subequations}
				where $\lambda\in\RR$ is the Lagrange multiplier. 
				
				The proofs of \cref{p:2011d4.10a}--\cref{p:2011d4.10d} are 
				presented in \cref{sub:5.1}--\cref{sub:5.4} below.
				
				\subsection{Case \cref{p:2011d4.10a}: $u_0 \neq 0,v_0\neq 0$}
				\label{sub:5.1}
				\begin{proof}
					Because  $u_0 \neq 0, v_0 \neq 0$, we obtain $\lambda \neq \pm 1$. Solving \cref{m2}, \cref{m3} and \cref{m4} gives
					$u = \frac{u_0}{(1+\lambda)}$, $v = \frac{v_0}{(1-\lambda)}$ and $\gamma = \gamma_0 + \frac{\lambda\alpha}{\beta^2} $. 
					By \cref{p:three:dim}, $1+\lambda>0$ and $1-\lambda>0$, i.e.,
					$\lambda\in \left]-1,1\right[$. 
					Substituting $u$ and $v$ back into equation \cref{m1}, we get the 
					(essentially) quintic equation \cref{eq:quintic}.
					Using also $p<q$ and $q>0$, we have 
					\begin{align*}
						(\forall \lambda\in \left]-1,1\right[)\ g'(\lambda)
						&=\frac{2}{(1-\lambda^2)^3}\big(-q(1+3\lambda^2) + 
						p(\lambda^3+3\lambda)\big)
						- 2\frac{\alpha^2}{\beta^2}\\
						&<\frac{2}{(1-\lambda^2)^3}\big(-q(1+3\lambda^2) + q(\lambda^3+3\lambda)\big)
						- 2\frac{\alpha^2}{\beta^2}\\
						&=\frac{2q(\lambda-1)^3}{(1-\lambda^2)^3}
						- 2\frac{\alpha^2}{\beta^2}
						=\frac{-2q}{(1+\lambda)^3}
						- 2\frac{\alpha^2}{\beta^2}\\
						&<0; 
					\end{align*}
					hence, $g$ is strictly decreasing. 
					Moreover,  $g(-1) = +\infty$, $g(1) =-\infty$ and $g$ is continuous on $\left]-1,1\right[$.
					Thus, $g(\lambda) = 0$ has unique zero in $\left]-1,1\right[$.
				\end{proof}\\
				
				\subsection{Case \cref{p:2011d4.10b}: $u_0 = 0, v_0 \neq 0$}
				\label{sub:5.2}
				\begin{proof}
					When $u_0 = 0$, the objective function is
					$$f(u,v,\gamma)=\|u\|^2+\|v-v_{0}\|^2+\beta^2|\gamma-\gamma_{0}|^2,$$
					and the KKT optimality conditions \cref{mall} become
					\begin{subequations}
						\label{mallu}
						\begin{align}
							(1+\lambda)u&=0\label{m4u}\\
							(1-\lambda)v&=v_{0}\label{m3u}\\
							\gamma&=\gamma_{0}+\frac{\lambda\alpha}{\beta^2}\label{m2u}\\
							\norm{u}^2 - \norm{v}^2&=2\alpha \gamma.\label{m1u}
						\end{align}
					\end{subequations}
					Then \cref{m4u} gives 
					\begin{equation}
						\label{e:220604a} 
						1+\lambda=0\;\;\text{or}\;\; u=0.
					\end{equation}
					Because $v_{0}\neq 0$, we have
					$1-\lambda\neq 0$, so that 
					\begin{equation}
						\label{e:220604b}
						v=\frac{v_{0}}{1-\lambda}.
					\end{equation}
					By \cref{p:three:dim}, $\lambda <1$.
					
					Our analysis is divided into the following three situations:
					
					\noindent\textbf{Situation 1: $\alpha(\gamma_{0}-\frac{\alpha}{\beta^2})<-\frac{\|v_{0}\|^2}{8}$.}
					
					In view of \cref{e:220604a}, we analyze two cases.
					
					\noindent \textbf{Case~1:}~$1+\lambda=0$, i.e., $\lambda =-1$. 
					By \cref{e:220604b},
					$v = \frac{v_0}{2}$, and then \cref{m1u} and \cref{m2u} give
					\begin{align*}
						\|u\|^2& =2\alpha\gamma+\frac{\|v_0\|^2}{4}=2\alpha\Big(\gamma_{0}-\frac{\alpha}{\beta^2}\Big)+\frac{\|v_0\|^2}{4}<0,
					\end{align*}
					which is absurd.
					
					\noindent \textbf{Case 2:} $u=0$. 
					By \cref{m1u}, $-\|v\|^2=2\alpha\gamma$, 
					together with \cref{e:220604b} and \cref{m2u}, we have
					$$g_1(\lambda):=\frac{\|v_{0}\|^2}{(1-\lambda)^2}+2\alpha\Big(\gamma_{0}+\frac{\lambda\alpha}{\beta^2}\Big)=0.$$
					As
					$$g_1'(\lambda)=\frac{2\|v_{0}\|^2}{(1-\lambda)^3}+\frac{2\alpha^2}{\beta^2}>0 \;\;\text{ on }\; \left]-\infty, 1\right[,$$
					$g_1$ is strictly increasing on $\left]-\infty, 1\right[$.
					Moreover,
					$g_1(1)=+\infty$ and
					$$g_1(-1)=\frac{\|v_{0}\|^2}{4}+2\alpha\Big(\gamma_{0}-\frac{\alpha}{\beta^2}\Big)
					<0.$$
					Because $g_1$ is strictly increasing and continuous, by the Intermediate Value Theorem,
					there exists a unique $\lambda\in\ ]-1, 1[$ such that $g_1(\lambda)=0$.
					Hence, the possible optimal solution is given by
					\begin{equation}\label{e:case2win}
						\Big(0, \frac{v_{0}}{1-\lambda}, \gamma_{0}+\frac{\lambda\alpha}{\beta^2}\Big),
					\end{equation}
					where $g_1(\lambda)=0$ and $\lambda\in\left]-1,1\right[$.
					
					Combining Case 1 and Case 2, we obtain that \cref{e:case2win} is the unique projection.

					\noindent\textbf{Situation 2: $\alpha(\gamma_{0}-\frac{\alpha}{\beta^2})>-\frac{\|v_{0}\|^2}{8}$.}
					
					In view of \cref{e:220604a}, we consider two cases: 
					
					\noindent \textbf{Case 1:}~$1+\lambda=0$, i.e., $\lambda =-1$.
					By \cref{e:220604b},
					$v = \frac{v_0}{2}$, and then \cref{m1u} and \cref{m2u} give
					\begin{align*}
						\|u\|^2& =2\alpha\gamma+\frac{\|v_0\|^2}{4}=2\alpha\Big(\gamma_{0}-\frac{\alpha}{\beta^2}\Big)+\frac{\|v_0\|^2}{4}>0.
					\end{align*}
					The possible optimal value is attained at
					\begin{equation}\label{e:winner}
						\Big(u, \frac{v_{0}}{2}, \gamma_{0}-\frac{\alpha}{\beta^2}\Big)
					\end{equation}
					with $\|u\|^2=2\alpha(\gamma_{0}-\frac{\alpha}{\beta^2})+\frac{\|v_{0}\|^2}{4}$ such that
					\begin{equation}\label{e:obj1}
						f\Big(u,\frac{v_{0}}{2},\gamma_{0}-\frac{\alpha}{\beta^2}\Big)=2\alpha\gamma_{0}
						-\frac{\alpha^2}{\beta^2}+\frac{\|v_{0}\|^2}{2}.
					\end{equation}

					\noindent \textbf{Case 2:} $u=0$. By \cref{m1u}, $-\|v\|^2=2\alpha\gamma$, together with \cref{m2u}, 
					we have
					$$g_1(\lambda):=\frac{\|v_{0}\|^2}{(1-\lambda)^2}+2\alpha\Big(\gamma_{0}+\frac{\lambda\alpha}{\beta^2}\Big)=0.$$
					As
					$$g_1'(\lambda)=\frac{2\|v_{0}\|^2}{(1-\lambda)^3}+\frac{2\alpha^2}{\beta^2}>0 \;\;\text{ on } \left]-\infty, 1\right[,$$
					$g_1$ is strictly increasing. 
					Observe that
					$$g_1(-1)=\frac{\|v_0\|^2}{4}+2\alpha\Big(\gamma_{0}-\frac{\alpha}{\beta^2}\Big)
					>0,$$
					and
					$g_1(-\infty)=-\infty$. By the Intermediate Value Theorem, 
					there exists a unique $\lambda\in\left]-\infty, -1\right[$
					such that $g_1(\lambda)=0$
					because $g_1$ is strictly increasing and continuous.
					The possible optimal value is attained at (recall \cref{e:220604b})
					$$\Big(0, \frac{v_{0}}{1-\lambda}, \gamma_{0}+\frac{\lambda\alpha}{\beta^2}\Big)$$ with
					\begin{equation}\label{e:obj22}
						f\Big(0,\frac{v_{0}}{1-\lambda},\gamma_{0}+\frac{\lambda\alpha}{\beta^2}\Big)=
						\frac{\lambda^2\|v_{0}\|^2}{(1-\lambda)^2}+\frac{\lambda^2\alpha^2}{\beta^2},
					\end{equation}
					where $\lambda$ is the unique solution of
					\begin{equation}\label{e:lamb}
						g_1(\lambda):=\frac{\|v_{0}\|^2}{(1-\lambda)^2}+2\alpha\Big(\gamma_{0}+\frac{\lambda\alpha}{\beta^2}\Big)=0\;\;
						\text{ in $\left]-\infty, -1\right[$.}
					\end{equation}
					
					Because both Case~1 and Case~2 may occur, we have to compare possible optimal objective function values,
					namely, \cref{e:obj1} and \cref{e:obj22}.
					We claim that Case~1 wins, i.e.,
					\begin{equation}\label{e:compare}
						2\alpha\gamma_{0}
						-\frac{\alpha^2}{\beta^2}+\frac{\|v_{0}\|^2}{2}<
						\frac{\lambda^2\|v_{0}\|^2}{(1-\lambda)^2}+\frac{\lambda^2\alpha^2}{\beta^2}.
					\end{equation}
					In view of \cref{e:lamb}, we have
					\begin{equation}\label{e:gv0}
						0<\frac{\|v_{0}\|^2}{(1-\lambda)^2}=-2\alpha\Big(\gamma_{0}+\frac{\lambda\alpha}{\beta^2}\Big),
						\;\text{ and so }\; \alpha\Big(\gamma_{0}+\frac{\lambda\alpha}{\beta^2}\Big)< 0.
					\end{equation}
					To show \cref{e:compare}, we shall reformulate it in equivalent forms:
					\begin{equation*}
						\bigg(\lambda^2-\frac{(1-\lambda)^2}{2}\bigg)\frac{\|v_{0}\|^2}{(1-\lambda)^2}
						+(1+\lambda^2)\frac{\alpha^2}{\beta^2}> 2\alpha\gamma_{0},
					\end{equation*}
					which is
					$$
					\frac{\lambda^2+2\lambda-1}{2}\bigg(-2\alpha\Big(\gamma_{0}+\frac{\lambda\alpha}{\beta^2}\Big)\bigg)
					+(1+\lambda^2)\frac{\alpha^2}{\beta^2}> 2\alpha\gamma_{0}$$
					by \cref{e:gv0}.
					After simplifications,
					this reduces to
					$$\frac{\alpha^2}{\beta^2}(1+\lambda)^2(1-\lambda)>\alpha\gamma_{0}(1+\lambda)^2.$$
					Since $\lambda+1<0$, this is equivalent to
					$$\frac{\alpha^2}{\beta^2}(1-\lambda)>\alpha\gamma_{0}, \;\;\text{ i.e., } \;\alpha\Big(\gamma_{0}+\frac{\lambda\alpha}{\beta^2}\Big)<\frac{\alpha^2}{\beta^2},$$
					which obviously holds because of \cref{e:gv0} and $\alpha^2/\beta^2>0$.
					
					Hence, equation \cref{e:winner} of Case~1 gives the optimal solution.
					
					\noindent\textbf{Situation 3: }
					\begin{equation}\label{e:negativeone}
						\alpha\Big(\gamma_{0}-\frac{\alpha}{\beta^2}\Big)=-\frac{\|v_{0}\|^2}{8}.
					\end{equation}
					We again consider two cases.
					
					\noindent \textbf{Case 1:}~$1+\lambda=0$, i.e., $\lambda =-1$.
					By \cref{m3u},
					$v = \frac{v_0}{2}$ and then \cref{m1u} and \cref{m2u} give
					\begin{align*}
						\|u\|^2& =2\alpha\gamma+\frac{\|v_0\|^2}{4}
						=2\alpha\Big(\gamma_{0}-\frac{\alpha}{\beta^2}\Big)+\frac{\|v_0\|^2}{4}=0,
					\end{align*}
					so $u=0$.
					The possible optimal value is attained at
					\begin{equation}\label{e:coincide1}
						\Big(0, \frac{v_{0}}{2}, \gamma_{0}-\frac{\alpha}{\beta^2}\Big)
					\end{equation}
					with
					\begin{equation*}
						f\Big(0,\frac{v_{0}}{2},\gamma_{0}-\frac{\alpha}{\beta^2}\Big)
						=
						\frac{\|v_{0}\|^2}{4}
						+
						\frac{\alpha^2}{\beta^2}.
					\end{equation*}
					
					\noindent \textbf{Case 2:} $u=0$. 
					By \cref{m1u}, $-\|v\|^2=2\alpha\gamma$, together with \cref{m2u}, 
					we have
					$$g_2(\lambda):=\frac{\|v_{0}\|^2}{(1-\lambda)^2}
					+2\alpha\Big(\gamma_{0}+\frac{\lambda\alpha}{\beta^2}\Big)=0.$$
					By \cref{e:negativeone},
					$$g_2(-1)=\frac{\|v_{0}\|^2}{4}+
					2\alpha\Big(\gamma_{0}-\frac{\alpha}{\beta^2}\Big)=0.$$
					As
					$$g_2'(\lambda)=\frac{2\|v_{0}\|^2}{(1-\lambda)^3}+\frac{2\alpha^2}{\beta^2}>0 \;\;\text{ on }\; \left]-\infty, 1\right[,$$
					$g_2$ is strictly increasing and continuous on $\left]-\infty,1\right[$, 
					so $\lambda=-1$ is the unique solution in $\left]-\infty, 1\right[$.
					Then the possible optimal value is attained at
					$$\Big(0, \frac{v_{0}}{2}, \gamma_{0}-\frac{\alpha}{\beta^2}\Big)$$ with
					\begin{equation}\label{e:obj2}
						f\Big(0,\frac{v_{0}}{2},\gamma_{0}-\frac{\alpha}{\beta^2}\Big)=
						\frac{\|v_{0}\|^2}{4}+\frac{\alpha^2}{\beta^2}.
					\end{equation}
					
					Therefore, Case~1 and Case~2 give exactly the same solution. 
					The optimal solution is given by
					\cref{e:coincide1}, and it can be recovered by \cref{e:winner}, 
					the optimal solution of Situation~2.
				\end{proof}

				\subsection{Case \cref{p:2011d4.10c}: $u_0\neq0, v_0 = 0$}
				\label{sub:5.3}
				\begin{proof}
					The minimization problem now is
					\begin{subequations}
						\label{e:original}
						\begin{align}
							\text{minimize}~~  \ f(u,v,\gamma)&=\|u_{0}-u\|^2+\|v\|^2+\beta^2
							|\gamma_{0}-\gamma|^2\\
							\text{ subject to}~~ \ \|u\|^2-\|v\|^2&=2\alpha\gamma.
						\end{align}
					\end{subequations}
					Rewrite it as
					\begin{subequations}
						\label{e:switch}
						\begin{align}
							\text{minimize}~~  \ f(u,v,\gamma)&=\|v\|^2+\|u_{0}-u\|^2+\beta^2 |\gamma_{0}-\gamma|^2\\
							\text{ subject to}~~ \  \|v\|^2-\|u\|^2&=2(-\alpha)\gamma.
						\end{align}
					\end{subequations}
					Luckily, we can apply \cref{sub:5.2} for the point $(0, u_{0}, \gamma_{0})$ and parameter
					$-\alpha$. 
					More precisely, when $-\alpha(\gamma_{0}-\frac{-\alpha}{\beta^2})<-\frac{\|u_{0}\|^2}{8}$, the optimal solution to
					\cref{e:switch} is
					$$\Big(0, \frac{u_{0}}{1-\tilde{\lambda}}, \gamma_{0}+\frac{\tilde{\lambda}(-\alpha)}{\beta^2}\Big)$$
					where
					$\tilde{g}_2(\tilde{\lambda})=0$, $\tilde{\lambda}\in\left]-1,1\right[$, 
					and
					$$\tilde{g}_2(\tilde{\lambda})=\frac{\|u_{0}\|^2}{(1-\tilde{\lambda})^2}
					+2(-\alpha)\Big(\gamma_{0}-\frac{\tilde{\lambda}\alpha}{\beta^2}\Big)=0.$$
					Put $\lambda=-\tilde{\lambda}$. 
					Simplifications give:
					when $\alpha(\gamma_{0}+\frac{\alpha}{\beta^2})>\frac{\|u_{0}\|^2}{8}$, 
					the optimal solution to \cref{e:switch} 
					is
					\begin{equation}\label{e:s:proj1}
						\Big(0,\frac{u_{0}}{1+{\lambda}}, \gamma_{0}+\frac{{\lambda}\alpha}{\beta^2}\Big)
					\end{equation}
					where
					$g_2({\lambda})=0$, ${\lambda}\in \left]-1,1\right[$, and
					$$g_2(\lambda) := \tilde{g}_2(-{\lambda})=\frac{\|u_{0}\|^2}{(1+{\lambda})^2}
					-2\alpha\Big(
					\gamma_{0}+\frac{{\lambda}\alpha}{\beta^2}\Big)=0.$$
					Switching the first and second components in \cref{e:s:proj1}
					gives the optimal solution to \cref{e:original}.

					When $-\alpha(\gamma_{0}-\frac{-\alpha}{\beta^2})\geq -\frac{\|u_{0}\|^2}{8}$,
					the optimal solution to
					\cref{e:switch} is
					$$\Big(v, \frac{u_{0}}{2}, \gamma_{0}-\frac{-\alpha}{\beta^2}\Big)$$
					with
					$$\|v\|^2=2(-\alpha)\Big(\gamma_{0}-\frac{-\alpha}{\beta^2}\Big)
					+\frac{\|u_{0}\|^2}{4}.$$
					That is,
					when $\alpha(\gamma_{0}+\frac{\alpha}{\beta^2})\leq \frac{\|u_{0}\|^2}{8}$,
					the optimal solution to \cref{e:switch} is
					\begin{equation}\label{e:s:proj2}
						\Big(v, \frac{u_{0}}{2}, \gamma_{0}+\frac{\alpha}{\beta^2}\Big)
					\end{equation}
					with
					$$\|v\|^2=-2\alpha\Big(\gamma_{0}+\frac{\alpha}{\beta^2}\Big)+\frac{\|u_{0}\|^2}{4}.$$
					Switching the first and second components in \cref{e:s:proj2}
					gives the optimal solution to \cref{e:original}.
				\end{proof}
				
				\subsection{Case \cref{p:2011d4.10d}: $u_0 = v_0 = 0$}
				\label{sub:5.4}
				\begin{proof}
					The objective function is $f(u,v,\gamma)=\|u\|^2+\|v\|^2+\beta^2|\gamma-\gamma_0|^2$, and
					the KKT optimality conditions \cref{mall} become
					\begin{subequations}
						\label{Cletters}
						\begin{align}
							\quad(1+\lam)u &=0, \label{Ca}\\
							\quad(1-\lam)v &=0,\label{Cb}\\
							\gamma&=\gamma_0+\frac{\lam\alpha}{\beta^2},\label{Cc}\\
							\|u\|^2-\|v\|^2 & =2\alpha\gamma.\label{Cd}
						\end{align}
					\end{subequations}
					We shall consider three cases:
					\begin{enumerate}
						\item \label{C1} $\alpha(\gamma_0-\frac{\alpha}{\beta^2})>0$; hence, 
						$\gamma_0-\frac{\alpha}{\beta^2}\neq0$.
						\item \label{C2} $\alpha(\gamma_0-\frac{\alpha}{\beta^2})=0$; hence, 
						$\gamma_0-\frac{\alpha}{\beta^2}=0$.
						\item \label{C3}
						$\alpha(\gamma_0-\frac{\alpha}{\beta^2})<0$; hence, 
						$\gamma_0-\frac{\alpha}{\beta^2}\neq0$.
					\end{enumerate}
					For each item \cref{C1}--\cref{C3}, we will apply \cref{Cletters}:\\
					\textbf{Case 1:} $\alpha(\gamma_0-\frac{\alpha}{\beta^2})>0$. 
					By \cref{Ca}, we have $\lam=-1$ or $u=0$.
					We consider two subcases.
					
					\textbf{Subcase~1:}~$\lam=-1$. Using \cref{Cb}, \cref{Cc} and \cref{Cd}, we obtain $v=0$, $\gamma=\gamma_0-\frac{\alpha}{\beta^2}$, and
					\begin{align}
						\label{uarc}
						\|u\|^2=2\alpha\Big(\gamma_0-\frac{\alpha}{\beta^2}\Big).
					\end{align}
					Therefore, the candidate for the solution is $(u,0,\gamma_0-\frac{\alpha}{\beta^2})$ with $u$ given by \cref{uarc} and its 
					objective function value is
					\begin{align}
						\label{obj1}
						f\Big(u,0,\gamma_0-\frac{\alpha}{\beta^2}\Big)
						= 2\alpha\Big(\gamma_0-\frac{\alpha}{\beta^2}\Big)+0
						+\beta^2\Big(\frac{-\alpha}{\beta^2}\Big)^2
						=2\alpha\gamma_0-\frac{\alpha^2}{\beta^2}.
					\end{align}
					
					\textbf{Subcase~2:}~$u=0$. Using \cref{Cb}--\cref{Cd}, we obtain
					$-\|v\|^2=2\alpha\gamma, \gamma=\gamma_0+\frac{\lam\alpha}{\beta^2}$ and $(1-\lam)v=0$. 
					We have to consider two further cases: $1-\lam=0$ or $v=0$.
					\begin{enumerate}
						\item $v=0$.  We get $-(0)^2=2\alpha\gamma \Rightarrow \gamma = 0$
						because $\alpha \neq 0$. This gives a possible
						solution $(0,0,0)$ with function value
						\begin{align}
							\label{obj2}
							f(0,0,0)=\|u\|^2+\|v\|^2+\beta^2|\gamma-\gamma_0|^2=\beta^2\gamma_0^2.
						\end{align}
						\item $\lam=1$. We have $\gamma=\gamma_0+\frac{\alpha}{\beta^2}$ and $-\|v\|^2=2\alpha(\gamma_0+\frac{\alpha}{\beta^2})$. So, $0\leq \|v\|^2=-2\alpha(\gamma_0+\frac{\alpha}{\beta^2})$. However,
						\begin{align}
							\label{refm}
							-2\alpha\Big(\gamma_0+\frac{\alpha}{\beta^2}\Big)
							=\underbrace{-2\alpha\Big(\gamma_0-\frac{\alpha}{\beta^2}\Big)}_{<0}-\frac{4\alpha^2}{\beta^2}<0
						\end{align}
						because $\alpha(\gamma_0-\frac{\alpha}{\beta^2})>0$. 
						This contradiction shows $\lam =1$ does not happen.
					\end{enumerate}
					We now compare objective function values \cref{obj1} and \cref{obj2}:
					$$2\alpha\gamma_0-\frac{\alpha^2}{\beta^2}<\beta^2\gamma_0^2\Leftrightarrow
					\beta^2\gamma_0^2+\frac{\alpha^2}{\beta^2}-2\alpha\gamma_0>0\Leftrightarrow\Big(
					\beta\gamma_0-\frac{\alpha}{\beta}\Big)^2>0\Leftrightarrow\beta^2\Big(\gamma_0-\frac{\alpha}{\beta^2}\Big)^2>0,$$
					which holds because $\gamma_0-\frac{\alpha}{\beta^2}\neq0$. Hence, the optimal solution is $(u,0,\gamma_0-\frac{\alpha}{\beta^2})$ with $\|u\|=\sqrt{2\alpha(\gamma_0-\frac{\alpha}{\beta^2})}$. That is,
					$$\proj{\Ct_2}(0,0,\gamma_0)=
					\mmenge{\Big(u,0,\gamma_0-\frac{\alpha}{\beta^2}\Big)}{\|u\|=\sqrt{2\alpha\Big(\gamma_0-\frac{\alpha}{\beta^2}\Big)}}.
					$$
					
					\noindent \textbf{Case~2:} $\alpha(\gamma_0-\frac{\alpha}{\beta^2})=0$; 
					hence, $\gamma_0-\frac{\alpha}{\beta^2}=0$. 
					By \cref{Ca}, we have two subcases to consider.\\
					\textbf{Subcase 1:}~$\lam=-1$. We have $v=0$, $\gamma=\gamma_0-\frac{\alpha}{\beta^2}=0,~\|u\|^2=2\alpha(\gamma_0-\frac{\alpha}{\beta^2})=0$.
					The possible solution is $(0,0,0)$.\\
					\textbf{Subcase 2:}~$u=0$. We have $-\|v\|^2=2\alpha\gamma$ and $\gamma=\gamma_0+\frac{\lam\alpha}{\beta^2}$. 
					By \cref{Cb}, $v=0$ or $\lam=1$. 
					This requires us to consider two further cases. 
					For $v=0$, we get $\gamma=0$, which
					gives a possible solution $(0,0,0)$. 
					For $\lam=1$, we get 
					$\gamma=\gamma_0+\frac{\alpha}{\beta^2},~\|v\|^2=
					-2\alpha(\gamma_0+\frac{\alpha}{\beta^2})=
					\frac{-4\alpha^2}{\beta^2}<0$, which is impossible, i.e., $\lam=1$ does not happen.
					
					Both \textbf{Subcase 1} and \textbf{Subcase 2} give the same solution $(0,0,0)$. Therefore, we have the optimal solution is $(0,0,0)$, when $\alpha(\gamma_0-\frac{\alpha}{\beta^2})=0$; equivalently, when 
					$\gamma_0=\frac{\alpha}{\beta^2}$.
					
					\noindent\textbf{Case 3:}~$\alpha(\gamma_0-\frac{\alpha}{\beta^2})<0$.
					In view of \cref{Ca}, we have $\lam=-1$ or $u=0$. 
					We show that $\lam=-1$ can't happen.
					Indeed, when $\lam=-1$, by \cref{Cb}--\cref{Cc}, we have $v=0,~\gamma=\gamma_0-\frac{\alpha}{\beta^2}$, and $0\leq\|u\|^2=2\alpha\gamma=2\alpha(\gamma_0-\frac{\alpha}{\beta^2})<0$, which is impossible.
					Therefore, we consider only the case $u=0$. 
					Then \cref{Cb}--\cref{Cd} yield 
					$\|v\|^2=-2\alpha\gamma,~\gamma=\gamma_0+\frac{\lam\alpha}{\beta^2},$ and $(1-\lam)v=0$, which requires us to consider two further cases.\\
					\textbf{Subcase 1:}~$v=0$. Then $\gamma=0$. The possible
					optimal solution is $(0,0,0)$ and its objective function value is
					\begin{align}
						\label{obj3}
						f(0,0,0)=\|u\|^2+\|v\|^2+\beta^2|\gamma-\gamma_0|^2=\beta^2\gamma_0^2.
					\end{align}
					\textbf{Subcase 2:}~$\lam=1$. Then $u=0$, $\gamma=\gamma_0+\frac{\alpha}{\beta^2}$, and  $-\|v\|^2=2\alpha(\gamma_0+\frac{\alpha}{\beta^2})$. We consider three additional cases based on the sign of  $\alpha(\gamma_0+\frac{\alpha}{\beta^2})$.
					\begin{enumerate}
						\item\label{i:nothappen} $\alpha(\gamma_0+\frac{\alpha}{\beta^2})>0$. This case never happens because
						the relation $0\geq -\|v\|^2=2\alpha(\gamma_0+\frac{\alpha}{\beta^2})>0$ 
						is absurd.
						\item\label{i:onlyzero} $\alpha(\gamma_0+\frac{\alpha}{\beta^2})=0$.
						As $\alpha\neq0$, we have $\gamma_0+\frac{\alpha}{\beta^2}=0$. This gives
						$\gamma=0,u=0$ and $v=0$. 
						So the possible optimal solution is $(0,0,0)$.
						\item $\alpha(\gamma_0+\frac{\alpha}{\beta^2})<0$. We
						have $\gamma_0+\frac{\alpha}{\beta^2}\neq0$.
						The possible optimal solution is $(0,v,\gamma_0+\frac{\alpha}{\beta^2})$ with $\|v\|=\sqrt{-2\alpha(\gamma_0+\frac{\alpha}{\beta^2})}$ and
						function value
						\begin{align}
							\label{obj4}
							f\Big(0,v,\gamma_0+\frac{\alpha}{\beta^2}\Big)=
							-2\alpha\Big(\gamma_0+\frac{\alpha}{\beta^2}\Big)
							+\beta^2\Big(\frac{\alpha}{\beta^2}\Big)^2=-2\alpha\gamma_0-\frac{\alpha^2}{\beta^2}.
						\end{align}
					\end{enumerate}
					Both \cref{i:nothappen} and \cref{i:onlyzero} 
					imply that $(0,0,0)$ from \textbf{Subcase~1}
					is the only optimal solution,
					when $\alpha^2/\beta^2>\alpha\gamma_{0}\geq -\alpha^2/\beta^2$.
					
					When $\alpha\gamma_0<-\frac{\alpha^2}{\beta^2}$, both \textbf{Subcase 1}
					and \textbf{Subcase 2} happen. We have to
					compare objectives \cref{obj3} and \cref{obj4}.
					We claim $f(0,v,\gamma_0+\frac{\alpha}{\beta^2})<f(0,0,0)$. 
					Indeed, this is equivalent to 
					$$-2\alpha\gamma_0-\frac{\alpha^2}{\beta^2}<\beta^2\gamma_{0}^2
					\Leftrightarrow \beta^2\gamma_0^2+2\alpha\gamma_0+\frac{\alpha^2}{\beta^2}>0
					\Leftrightarrow\Big(\beta\gamma_0+\frac{\alpha}{\beta}\Big)^2>0\Leftrightarrow\beta^2\Big(\gamma_0+
					\frac{\alpha}{\beta^2}\Big)^2>0$$
					which holds because $\gamma_0+\frac{\alpha}{\beta^2}\neq0$. 
					Therefore, the optimal solution is $(0,v,\gamma_0+\frac{\alpha}{\beta^2})$
					with $\|v\|=\sqrt{-2\alpha(\gamma_0+\frac{\alpha}{\beta^2})}$, i.e.,
					$$\proj{\Ct_2}(0,0,\gamma_0)=
					\mmenge{\Big(0,v,\gamma_0+\frac{\alpha}{\beta^2} \Big)}{
						\|v\|=\sqrt{-2\alpha\Big(\gamma_0+\frac{\alpha}{\beta^2}\Big)}}$$
					when
					$\alpha\gamma_0<\frac{-\alpha^2}{\beta^2}$.
				\end{proof}
				
				Altogether, \cref{sub:5.1}--\cref{sub:5.4} conclude the proof of \cref{p:2011d4.10}.
			\end{proof}

			Let us illustrate \cref{p:2011d4.10}.
			
			\begin{example}
				\label{ex:1}
				Suppose that $X=\RR$, $\alpha=5$, and $\beta=1$. 
				Writing $z$ instead of $\gamma$, we note that 
				$\tCalp$ turns into the set
				\begin{equation*}
					S := \menge{(x,y,z)\in\RR^3}{x^2-y^2=10z} 
					= \gr \big((x,y)\mapsto \tfrac{1}{10}(x^2-y^2)\big).
				\end{equation*}
				Let us now compute $P_S(x_0,y_0,z_0)$ for various points.
				\begin{enumerate}
					\item Suppose that $(x_0,y_0,z_0)=(2,-3,4)$.\\
					In view of \cref{p:2011d4.10}\cref{p:2011d4.10a}, we set
					$p := |x_0|^2-|y_0|^2 = 2^2-(-3)^2 = -5$ and 
					$q := |x_0|^2+|y_0|^2 = 2^2+(-3)^2 = 13$. 
					Following \cref{eq:quintic}, we consider the equation
					$$\frac{(\lambda^2 + 1 ) p - 2\lambda q}{(1-\lambda^2)^2} -
					\frac{2\lambda\alpha^2}{\beta^2} - 2\alpha \gamma_0=
					-\frac{5\lambda^2+26\lambda+5}{(1-\lambda^2)^2}-50\lambda-40=0
					$$
					which has 
					$\lambda =  -0.52416$ as its unique (approximate) root in $\left]-1,1\right[$.
					Using \cref{e:proj_formula} now yields
					\begin{align*}
						P_S(x_0,y_0,z_0) &= \bigg\{\Big(\dfrac{x_0}{1+\lambda}, \dfrac{y_0}{1-\lambda}, z_0+\dfrac{\lambda\alpha}{\beta^2} \Big)\bigg\}\\
						&= \Big\{\big(4.20311,-1.96830,1.37919 \big) \Big\}.
					\end{align*}
					This is depicted in \cref{fig1} with the green arrow.
					\item Suppose that $(x_0,y_0,z_0)=(0,-3,3)$.\\
					In view of 
					\cref{p:2011d4.10}\cref{p:2011d4.10b}, 
					we evaluate $\alpha(z_0-\frac{\alpha}{\beta^2})=5(3-5)=-10
					< -\frac{9}{8}=-\frac{|y_0|^2}{8}$ 
					and we are thus in case \cref{p:2011d4.10b(a)}. In view of 
					\cref{eq:cubic1}, we consider the equation
					\begin{equation*}
						\frac{|y_{0}|^2}{(1-\lambda)^2}+\frac{2\lambda\alpha^2}{\beta^2}
						+2\alpha z_{0}
						= \frac{9}{(1-\lambda)^2} + 50\lambda +30 =0
					\end{equation*}
					which has $\lambda = -0.66493$ as its unique (approximate) root
					in $\left]-1,1\right[$. 
					Using \cref{e:220605a} now yields
					\begin{align*}
						P_S(x_0,y_0,z_0)
						&= \bigg\{\Big(
						0,\dfrac{v_{0}}{1-\lambda}, \gamma_{0}+\dfrac{\lambda\alpha}{\beta^2}
						\Big) \bigg\}\\
						&= \Big\{\big(0,-1.80187,-0.32467 \big) \Big\}.
					\end{align*}
					This is depicted in \cref{fig1} with a single blue arrow. 
					\item Suppose that $(x_0,y_0,z_0)=(0,\sqrt{32},6)=(0,5.65685,6)$.\\
					In view of 
					\cref{p:2011d4.10}\cref{p:2011d4.10b}, 
					we evaluate $\alpha(z_0-\frac{\alpha}{\beta^2})=5(6-5)= 5
					> -4=-\frac{32}{8}=-\frac{|y_0|^2}{8}$ 
					and we are thus in case \cref{p:2011d4.10b(b)}.
					We compute 
					\begin{equation*}
						\sqrt{2\alpha\Big(z_0-\frac{\alpha}{\beta^2} \Big)+\frac{|y_0|^2}{4}}
						= 
						\sqrt{10(6-5)+\frac{32}{4}} = \sqrt{18}
					\end{equation*}
					and now \cref{e:220605b} yields 
					\begin{align*}
						P_S(x_0,y_0,z_0) 
						&= \mmenge{\Big(x,\frac{y_0}{2},z_0-\frac{\alpha}{\beta^2}\Big)}{ 
							|x|=\sqrt{{2\alpha\Big(z_{0}-\frac{\alpha}{\beta^2}\Big)+\frac{|y_{0}|^2}{4}}}, u\in \RR}\\
						&= \Big\{\big(\pm\sqrt{18},\sqrt{8},1 \big) \Big\}
						= \Big\{\big(\pm 4.24264 ,2.82843,1 \big) \Big\}.
					\end{align*}
					This is depicted in \cref{fig1} with double blue arrows. 
					\item Suppose that $(x_0,y_0,z_0)=(0,0,6)$.\\
					In view of 
					\cref{p:2011d4.10}\cref{p:2011d4.10d}, 
					we have $\alpha z_0 = 5(6)=30 > 25 = \frac{\alpha^2}{\beta^2}$ 
					and we are thus in case \cref{d4.10da}. 
					We compute
					\begin{equation*}
						\sqrt{2\alpha\Big(\gamma_0-\frac{\alpha}{\beta^2}\Big)}
						= \sqrt{10(6-5)} = \sqrt{10}
					\end{equation*}
					and now \cref{e:220605c} yields
					\begin{align*}
						P_S(x_0,y_0,z_0) 
						&= \mmenge{\Big(x,0,z_0-\frac{\alpha}{\beta^2}\Big)}{ 
							|x|=\sqrt{{2\alpha\Big(z_{0}-\frac{\alpha}{\beta^2}\Big)}}, u\in \RR}\\
						&= \Big\{\big(\pm\sqrt{10},0,1 \big) \Big\}
						= \Big\{\big(\pm 3.16228,0,1 \big) \Big\}.
					\end{align*}
					This is depicted in \cref{fig1} with double black arrows.
					\item Suppose that $(x_0,y_0,z_0)=(0,0,4)$.\\
					In view of 
					\cref{p:2011d4.10}\cref{p:2011d4.10d}, 
					we have $|\alpha z_0| = |5(4)|=20 < 25 = \frac{\alpha^2}{\beta^2}$ 
					and we are thus in case \cref{d4.10db}. Therefore,
					\begin{equation*}
						P_S(x_0,y_0,z_0) = \big\{(0,0,0) \big\}.
					\end{equation*}
					This is depicted in \cref{fig1} with a single black arrow.
				\end{enumerate}
			\end{example}

			\begin{figure}[ht]
				\centering
				\includegraphics[width=0.5\textwidth]{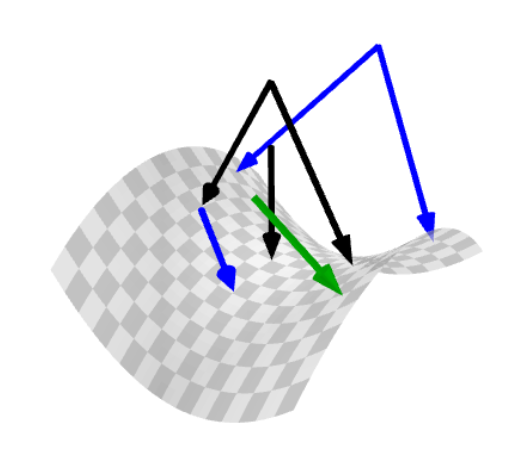}
				\caption{Visualization of the 5 projections from \cref{ex:1}.}
				\label{fig1}
			\end{figure}

			\section{Further results}
			
			\label{sec:further}
			
			Recall that
			$$\Calp= \menge{(x,y,\gamma)\in X \times X \times \RR}{\scal{x}{y} =\alpha\gamma},$$
			and this is the representation more natural to use in Deep Learning 
			(see \cite{elser2021learning}). 
			Armed with \cref{p:2011d4.10}, 
			the projection onto $\Calp$ now readily obtained:
			
			\begin{theorem}
				\label{t:2011411}
				Let $(x_{0},y_{0},\gamma_{0})\in X\times X\times \RR$. 
				Then the following hold:
				\begin{enumerate}
					\item \label{t:2011411a}
					If $x_0 \neq \pm y_0$, then 
					\begin{equation*}
						\proj{\Calp}(x_0,y_0,\gamma_0)=
						\bigg\{\Big(\dfrac{x_0 -\lambda y_0}{1-\lambda^2},\dfrac{y_0 -\lambda x_0}{1-\lambda^2},\gamma_0 +  \dfrac{\lambda\alpha}{\beta^2} \Big)\bigg\}
					\end{equation*}
					for a unique $\lambda \in\left]-1,1\right[$ that solves the 
					(essentially) quintic equation
					$$
					g(\lambda) := \frac{(\lambda^2 + 1 ) p - 2\lambda q}{(1-\lambda^2)^2}-
					\frac{2 \lambda \alpha^2}{\beta^2}-2\alpha \gamma_0= 0,$$
					where $p :=2\scal{x_0}{y_0}$ and $q :=\norm{x_0}^2 + \norm{y_0}^2.$
					
					\item \label{t:2011411b} 
					If $y_0=-x_0\neq 0$, then we have the following: \\
					a) When $\alpha(\gamma_0-\frac{\alpha}{\beta^2})< -\frac{\|x_0\|^2}{4}$, 
					then
					$$\proj{\Calp}(x_0,-x_0,\gamma_0) 
					=\bigg\{\Big(\dfrac{x_{0}}{1-\lambda},\dfrac{-x_{0}}{1-\lambda},\gamma_0+\dfrac{\lambda\alpha}{\beta^2} \Big) \bigg\}
					$$
					for a unique $\lambda\in \left]-1, 1\right[$ that solves
					$$g_1(\lambda) := \frac{2\|x_0\|^2}{(1-\lam)^2}+ \frac{2\lam\alpha^2}{\beta^2}+2\alpha \gamma_0=0.$$
					b) When $\alpha(\gamma_0-\frac{\alpha}{\beta^2})\geq -\frac{\|x_0\|^2}{4}$, then
					\begin{align*}
						&\proj{\Calp}(x_0,-x_0,\gamma_0) =\\
						&\quad\mmenge{\Big(\dfrac{x_{0}}{2}+\dfrac{u}{\sqrt{2}}, -\dfrac{x_0}{2}+\dfrac{u}{\sqrt{2}}, \gamma_0-\dfrac{\alpha}{\beta^2} \Big)}{\|u\|=\sqrt{2\alpha\Big(\gamma_0-\frac{\alpha}{\beta^2}\Big)+\frac{\|x_0\|^2}{2}},~ u \in X}
					\end{align*}
					which is a singleton if and only if 
					$\alpha(\gamma_0-\frac{\alpha}{\beta^2})= -\frac{\|x_0\|^2}{4}$. 
					\item \label{t:2011411c} If  $y_0=x_0\neq 0$, 
					then we have the following: \\
					a) When $\alpha(\gamma_0+\frac{\alpha}{\beta^2})> \frac{\|x_0\|^2}{4}$,
					then 
					$$\proj{\Calp}(x_0,x_0,\gamma_0) = 
					\bigg\{\Big(\dfrac{x_{0}}{1+\lambda}, \dfrac{x_{0}}{1+\lambda}, \gamma_0+\frac{\lambda\alpha}{\beta^2} \Big) \bigg\}
					$$
					for a unique $\lambda\in \left]-1,1\right[$ that solves 
					the (essentially) cubic equation
					$$g_2(\lam) := \frac{2\|x_0\|^2}{(1+\lam)^2}- \frac{2\lam\alpha^2}{\beta^2}-2\alpha \gamma_0=0.
					$$
					b) When $\alpha(\gamma_0+\frac{\alpha}{\beta^2})\leq \frac{\|x_0\|^2}{4}$, then 
					\begin{align*}
						&\proj{\Calp}(x_0,x_0,\gamma_0)=\\ 
						&\quad \mmenge{\Big(\dfrac{x_{0}}{2}-\dfrac{v}{\sqrt{2}},\dfrac{x_0}{2}+\dfrac{v}{\sqrt{2}},\gamma_0+\dfrac{\alpha}{\beta^2}  \Big)}{
							\|v\|=\sqrt{-2\alpha\Big(\gamma_0+\frac{\alpha}{\beta^2}\Big)+\frac{\|x_0\|^2}{2}},~ v \in X}
					\end{align*}
					which is a singleton if and only if 
					$\alpha(\gamma_0+\frac{\alpha}{\beta^2})= \frac{\|x_0\|^2}{4}$. 
					\item \label{t:2011411d}
					If $x_0=y_0=0$, then we have the following: \\
					a) When $\alpha\gamma_0>\frac{\alpha^2}{\beta^2}$, 
					then the projection is the non-singleton set
					$$\proj{\Calp}(0,0,\gamma_0)= 
					\mmenge{\Big(\dfrac{u}{\sqrt{2}}, \dfrac{u}{\sqrt{2}}, \gamma_0-\frac{\alpha}{\beta^2}\Big)}{\|u\|=\sqrt{2\alpha\Big(\gamma_0-\frac{\alpha}{\beta^2}\Big)},\, u\in X}.
					$$
					b)   When $|\alpha\gamma_0|\leq\frac{\alpha^2}{\beta^2}$, 
					then 
					$$\proj{\Calp}(0,0,\gamma_0)=
					\big\{(0,0,0)\big\}. 
					$$
					c) When  $\alpha\gamma_0<-\frac{\alpha^2}{\beta^2}$, 
					then the projection is the non-singleton set
					$$\proj{\Calp}(0,0,\gamma_0)=
					\mmenge{\Big(-\dfrac{v}{\sqrt{2}}, \dfrac{v}{\sqrt{2}},\gamma_0+\dfrac{\alpha}{\beta^2} \Big)}{
						\|v\|=\sqrt{-2\alpha\Big(\gamma_0+\frac{\alpha}{\beta^2}\Big)},\, v\in X}.
					$$
				\end{enumerate}
			\end{theorem}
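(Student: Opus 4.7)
The plan is to reduce everything to \cref{p:2011d4.10} via the change of variables in \cref{p:transform}. By \cref{p:transform}\cref{i:proj}, we have $\proj{\Calp}(x_0,y_0,\gamma_0)=A\proj{\tCalp}(A^{\intercal}[x_0,y_0,\gamma_0]^{\intercal})$, so the first step is to set
$$(u_0,v_0,\gamma_0):=A^{\intercal}(x_0,y_0,\gamma_0)=\Big(\tfrac{x_0+y_0}{\sqrt{2}},\,\tfrac{y_0-x_0}{\sqrt{2}},\,\gamma_0\Big),$$
apply \cref{p:2011d4.10} to this point, and then left-multiply the resulting projection(s) by $A$, i.e., map $(u,v,\gamma)$ to $\big(\tfrac{u-v}{\sqrt{2}},\tfrac{u+v}{\sqrt{2}},\gamma\big)$.

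Second, I would match the four cases of the two theorems via the equivalences $u_0\neq0\siff x_0+y_0\neq0$ and $v_0\neq0\siff x_0-y_0\neq0$, so that case \cref{t:2011411a}$\leftrightarrow$\cref{p:2011d4.10a}, case \cref{t:2011411b}$\leftrightarrow$\cref{p:2011d4.10b} (via $u_0=0$, $v_0=-\sqrt{2}\,x_0\neq0$), case \cref{t:2011411c}$\leftrightarrow$\cref{p:2011d4.10c} (via $u_0=\sqrt{2}\,x_0\neq0$, $v_0=0$), and case \cref{t:2011411d}$\leftrightarrow$\cref{p:2011d4.10d}. The key auxiliary identities are
$$\|u_0\|^2=\tfrac12\|x_0+y_0\|^2,\quad \|v_0\|^2=\tfrac12\|x_0-y_0\|^2,$$
which immediately yield $\|u_0\|^2-\|v_0\|^2=2\scal{x_0}{y_0}=p$, $\|u_0\|^2+\|v_0\|^2=\|x_0\|^2+\|y_0\|^2=q$ for case \cref{t:2011411a}, and $\|v_0\|^2=2\|x_0\|^2$ (resp.\ $\|u_0\|^2=2\|x_0\|^2$) for case \cref{t:2011411b} (resp.\ \cref{t:2011411c}). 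Substituting these into the quintic equation \cref{eq:quintic} and the cubic equations \cref{eq:cubic1}--\cref{eq:cubic2} of \cref{p:2011d4.10} turns them directly into the equations $g$, $g_1$, $g_2$ stated in \cref{t:2011411}; similarly the threshold conditions $\alpha(\gamma_0\pm\tfrac{\alpha}{\beta^2})\lessgtr\pm\tfrac{\|v_0\|^2}{8}$ become the stated thresholds $\pm\tfrac{\|x_0\|^2}{4}$.

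Third, I would push the formulas for $(u,v,\gamma)\in\proj{\tCalp}$ through $A$. For case \cref{t:2011411a}, a short computation gives
\begin{align*}
\tfrac{1}{\sqrt{2}}\Big(\tfrac{u_0}{1+\lambda}-\tfrac{v_0}{1-\lambda}\Big)
&=\tfrac12\cdot\tfrac{(x_0+y_0)(1-\lambda)-(y_0-x_0)(1+\lambda)}{1-\lambda^2}=\tfrac{x_0-\lambda y_0}{1-\lambda^2},\\
\tfrac{1}{\sqrt{2}}\Big(\tfrac{u_0}{1+\lambda}+\tfrac{v_0}{1-\lambda}\Big)
&=\tfrac{y_0-\lambda x_0}{1-\lambda^2},
\end{align*}
which is the desired formula. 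For cases \cref{t:2011411b} and \cref{t:2011411c}, the projection has one component zero, so applying $A$ is just a routine substitution that produces the $\pm\tfrac{x_0}{1\mp\lambda}$ or $\tfrac{x_0}{1+\lambda}$ formulas as well as the correct form $\tfrac{x_0}{2}\pm\tfrac{u}{\sqrt2}$ (respectively $\tfrac{x_0}{2}\mp\tfrac{v}{\sqrt2}$) in the non-singleton branches. Case \cref{t:2011411d} is the simplest since $(u_0,v_0)=(0,0)$ and we just apply $A$ to the spherical projection set.

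The main obstacle is purely bookkeeping: keeping track of which of $u_0,v_0$ vanishes, and ensuring the right sign conventions when applying $A$ (for instance, $v_0=-\sqrt2\,x_0$ in case \cref{t:2011411b} produces the apparent sign flip that correctly yields $\tfrac{x_0}{1-\lambda}$ and $\tfrac{-x_0}{1-\lambda}$). No additional analytical difficulty arises, because the existence of a solution, the characterization of when the projection is a singleton, and the uniqueness/monotonicity of the root in each equation are all inherited directly from \cref{p:2011d4.10} via the fact that $A$ is an isometry.
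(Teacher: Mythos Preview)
Your proposal is correct and follows exactly the same approach as the paper: reduce to \cref{p:2011d4.10} via \cref{p:transform}\cref{i:proj}, compute $A^{\intercal}(x_0,y_0,\gamma_0)$, match cases through $u_0=\tfrac{x_0+y_0}{\sqrt2}$ and $v_0=\tfrac{y_0-x_0}{\sqrt2}$, and push the resulting formulas back through $A$. In fact you supply more of the bookkeeping detail (the identities for $p,q$, the threshold translations, and the explicit check of the $A$-image in case~\cref{t:2011411a}) than the paper's own proof, which simply states the reduction and leaves the routine substitutions to the reader.
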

			\begin{proof}
				With
				$$A = \begin{bmatrix}
					\frac{1}{\sqrt{2}}\Id & -\frac{1}{\sqrt{2}}\Id & 0\\
					\frac{1}{\sqrt{2}}\Id & \frac{1}{\sqrt{2}}\Id & 0\\
					0 & 0 & 1
				\end{bmatrix}$$
				in mind,
				by \cref{p:transform}\cref{i:proj} we have
				\begin{align*}
					\proj{\Calp}[x_0,y_0,\gamma_0]^\intercal & =A\proj{\tCalp}A^{\intercal}[x_{0},y_{0},\gamma_{0}]^\intercal\\
					&=A\proj{\tCalp}
					\Big[\frac{x_0+y_0}{\sqrt{2}},\frac{-x_0+y_0}{\sqrt{2}},\gamma_0\Big]^\intercal.
				\end{align*}
				Hence \cref{t:2011411a}--\cref{t:2011411d} follow by applying \cref{p:2011d4.10}.
			\end{proof}
			
			\begin{remark} \cref{t:2011411}\cref{t:2011411a} was given in 
				\cite[Appendix~B]{elser2021learning}
				without a rigorous mathematical justification.
			\end{remark}
			
			It is interesting to ask what happens when $\alpha\rightarrow 0$.
			
			\begin{theorem} 
				Suppose that $X=\RR^n$. 
				Then $\proj{\Calp}\graphc \proj{C\times\RR}=\proj{C}\times\Id$ and
				$\proj{\tCalp}\graphc\proj{\Ct\times\RR}=\proj{\Ct}\times\Id$
				when $\alpha\rightarrow 0$.
			\end{theorem}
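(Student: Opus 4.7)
The plan is to reduce the statement to a direct application of the Rockafellar--Wets result (Fact \ref{f:projconvergence}), combined with the set-convergence statements already established in Proposition \ref{p:setconvergence}, and with a standard product-space decomposition of projections.

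First I would dispose of the product-structure identities $\proj{C\times\RR}=\proj{C}\times\Id$ and $\proj{\Ct\times\RR}=\proj{\Ct}\times\Id$. These are instances of the elementary fact that for closed sets $S_1\subseteq\RR^{m_1}$ and $S_2\subseteq\RR^{m_2}$, one has $\proj{S_1\times S_2}(x_1,x_2)=\proj{S_1}(x_1)\times \proj{S_2}(x_2)$, which follows by separating the squared-distance into a sum of two independent squared-distances. Taking $S_2=\RR$ gives $\proj{S_2}=\Id$, which yields the two claimed identities (note that the weight $\beta$ on the $\gamma$-component does not affect this decomposition since the norm on $X\times X\times\RR$ is still a product norm).

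Next, since $X=\RR^n$, the ambient space $X\times X\times\RR\cong\RR^{2n+1}$ is finite-dimensional, so Fact \ref{f:projconvergence} applies. All four sets $\Calp$, $C\times\RR$, $\tCalp$, $\Ct\times\RR$ are closed (this is verified for $\Calp$ in Proposition~\ref{i:norm-closed} and follows the same way for the others since they are level sets of continuous functions). By Proposition \ref{p:setconvergence}\cref{i:uv} and \cref{i:xy}, we have the set-convergence
\begin{equation*}
\tCalp \;\longrightarrow\; \Ct\times\RR \quad\text{and}\quad \Calp \;\longrightarrow\; C\times\RR
\end{equation*}
as $\alpha\to 0$. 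Fact \ref{f:projconvergence} (applied along any sequence $\alpha_k\to 0$, which is how graphical convergence is defined) then yields
\begin{equation*}
\proj{\tCalp}\graphc \proj{\Ct\times\RR}\quad\text{and}\quad \proj{\Calp}\graphc \proj{C\times\RR},
\end{equation*}
and combining with the product-structure identities of the first paragraph completes the proof.

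The only mildly delicate point is bridging between the continuous parameter $\alpha\to 0$ and the sequential notion of graphical convergence in the definition; this is routine because graphical convergence of a parametrized family is equivalent to graphical convergence along every sequence of parameters tending to the limit, and Proposition~\ref{p:setconvergence} was proved in exactly that sequential/epsilon form. Beyond this, there is no real obstacle: the theorem is essentially a corollary of the two previously established ingredients.
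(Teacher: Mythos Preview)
Your proposal is correct and follows exactly the paper's approach: the paper's proof consists of the single line ``Apply \cref{p:setconvergence} and \cref{f:projconvergence},'' and you have simply unpacked this with additional detail on the product decomposition $\proj{C\times\RR}=\proj{C}\times\Id$ and on the sequential-vs-continuous parameter issue.
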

			\begin{proof} 
				Apply \cref{p:setconvergence} and \cref{f:projconvergence}.
			\end{proof}
			\begin{remark} The projection onto the cross $C$, $\proj{C}$, has been given in \cite{bauschke2022projection}.
			\end{remark}

			\section*{Acknowledgments}
			HHB and XW were supported by NSERC Discovery Grants. MKL was partially supported by a SERB-UBC Fellowship and NSERC Discovery Grants of HHB and XW.

			\bibliographystyle{siam}
			\bibliography{apao}

		\end{document}